\newtheorem{theorem}{Theorem}[section]
\newtheorem{lemma}[theorem]{Lemma}
\newtheorem{proposition}[theorem]{Proposition}
\newtheorem{corollary}[theorem]{Corollary}
\newtheorem{definition}[theorem]{Definition}
\newtheorem{example}[theorem]{\sc Example}
\newtheorem{remark}[theorem]{Remark}
\newcommand {\C}    {\mathbb{C}}
\newcommand {\R}    {\mathbb{R}}
\renewcommand{\epsilon}{\varepsilon}
\begin{document}
\date{\today}
\subjclass[2010]{Primary: 47B35; Secondary: 30H20, 81S10}
\keywords{Toeplitz operators, Fock space, semi-commutator, semi-classical limit, heat transform, oscillation}

\title[Toeplitz quantization on Fock space]{Toeplitz quantization on Fock space}

\author[W. Bauer]{W. Bauer}
\address{
Wolfram Bauer \endgraf 
Institut für Analysis \endgraf
Welfengarten 1, 30167 Hannover, Germany \endgraf}
\email{bauer@math.uni-hannover.de}

\author[L. A. Coburn]{L.A. Coburn}
\address{
Lewis A. Coburn  \endgraf
Department of Mathematics \endgraf
SUNY at Buffalo, New York 14260, USA \endgraf}
\email{lcoburn@buffalo.edu}

\author[R. Hagger]{R. Hagger}
\address{
Raffael Hagger  \endgraf
Institut für Analysis \endgraf
Welfengarten 1, 30167 Hannover, Germany \endgraf}
\email{raffael.hagger@math.uni-hannover.de}

\thanks{The first and third author acknowledge support through DFG (Deutsche Forschungsgemeinschaft), \\BA 3793/4-1.}

\begin{abstract}
For Toeplitz operators $T_f^{(t)}$ acting on the weighted Fock space $H_t^2$, we consider the semi-commutator $T_f^{(t)}T_g^{(t)} - T_{fg}^{(t)}$, where $t > 0$ is a certain weight parameter that may be interpreted as Planck's constant $\hbar$ in Rieffel's deformation quantization. In particular, we are interested in the semi-classical limit
\begin{equation}\tag{$*$}
\lim\limits_{t \to 0} \|T_f^{(t)}T_g^{(t)} - T_{fg}^{(t)}\|_t.
\end{equation}
It is well-known that $\|T_f^{(t)}T_g^{(t)} - T_{fg}^{(t)}\|_t$ tends to $0$ under certain smoothness assumptions imposed on $f$ and $g$. This result was recently extended to $f,g \in \textup{BUC}(\C^n)$ by Bauer and Coburn. We now further generalize $(*)$ to (not necessarily bounded) uniformly continuous functions and symbols in the algebra $\textup{VMO}\cap L^{\infty}$ of bounded functions  having vanishing mean oscillation on $\mathbb{C}^n$. Our approach is based on the algebraic identity $T_f^{(t)}T_g^{(t)} - T_{fg}^{(t)} = -(H_{\bar{f}}^{(t)})^*H_g^{(t)}$, where $H_g^{(t)}$ denotes the Hankel operator corresponding to the symbol $g$, and norm estimates in terms of the (weighted) heat transform. As a consequence, only $f$ (or likewise only $g$) has to be contained in one of the above classes for $(*)$ to vanish. For $g$ we only have to impose $\limsup_{t \to 0} \|H_g^{(t)}\|_t < \infty$, e.g.~$g \in L^{\infty}(\C^n)$. We prove that the set of all symbols $f\in L^{\infty}
 (\mathbb{C}^n)$ with the property that $\lim_{t \rightarrow 0}\|T^{(t)}_fT^{(t)}_g-T^{(t)}_{fg}\|_t= \lim_{t \rightarrow 0} \| T_g^{(t)}T_f^{(t)}-T_{gf}^{(t)}\|_t=0$ for all $g\in L^{\infty}(\mathbb{C}^n)$ coincides with $\textup{VMO}\cap L^{\infty}$. 
Additionally, we show that $\lim\limits_{t \to 0} \|T_f^{(t)}\|_t = \|f\|_{\infty}$ holds for all $f \in L^{\infty}(\C^n)$. Finally, we present new examples, including bounded smooth functions, where $(*)$ does not vanish. 
\end{abstract}

\maketitle
\section{Introduction}
\label{section_introduction}
\setcounter{equation}{0}

For a suitable family of functions $\mathcal{F}$ and Poisson bracket $\{\cdot,\cdot\}$, one can consider the deformation quantization (in the sense of Rieffel \cite{R,R2}) $f \mapsto T_f^{(t)}$, where $t \sim \hbar > 0$ is a weight parameter and $\{T_f^{(t)} : f \in \mathcal{F}\}$ consists of linear operators on an appropriate Hilbert space. Hereby essential are the following limit conditions:
\begin{align} \label{semiclassical_limit}
\lim\limits_{t \to 0} \| T^{(t)}_{f} \|_{t} &= \|f\|_{\infty}, \notag \\*
\lim\limits_{t \to 0} \|T_f^{(t)}T_g^{(t)} - T_{fg}^{(t)}\|_t &= 0,\\*
\lim\limits_{t \to 0} \|\frac{1}{it}[T_f^{(t)},T_g^{(t)}] - T_{\{f,g\}}^{(t)}\|_t &= 0.\notag
\end{align}
A typical approach to obtain such a quantization is to consider a family of weighted probability measures $\mu_t$ on some domain $\Omega \subseteq \C^n$ with corresponding Lebesgue space $L^2(\Omega,\mu_t)$ and then associate a Toeplitz operator $T_f^{(t)}$ to every $f \in \mathcal{F}$. This construction was considered for functions on several different domains $\Omega$ and results like \eqref{semiclassical_limit} have been obtained under certain smoothness assumptions on the functions $f$ and $g$ (see \cite{Be1,Be2,Be3,BMS,Bo,BLU,C,E,KL}).

In this paper we are interested in the case where $\Omega = \C^n$ and $\{\mu_t\}_{t > 0}$ is a family of Gaussian measures defined below. It has been shown in \cite{C} that \eqref{semiclassical_limit} holds in case $f$ and $g$ are the sum of trigonometric polynomials and $(2n+6)$-times continuously differentiable functions with compact support. In \cite{Bo} this result was extended to symbols $f$ and $g$ whose derivatives up to order $(4n+6)$ are continuous and bounded. It was shown recently in \cite{BC00} that the second equation of \eqref{semiclassical_limit} even holds for bounded uniformly continuous symbols (BUC).

The goal of the present paper is to push this result even further by considering (no longer bounded) uniformly continuous functions ($\textup{UC}(\mathbb{C}^n$)) as well the classical space \cite{S} of bounded functions of vanishing mean oscillation ($\textup{VMO}(\C^n) \cap L^{\infty}(\C^n)$). We note that the algebra $\textup{BUC}(\C^n)$ is contained in $\textup{VMO}(\C^n) \cap L^{\infty}(\C^n)$ but $\textup{VMO}(\C^n) \cap L^{\infty}(\C^n)$ is a well-studied algebra which also contains discontinuous functions.

The corresponding results for bounded symmetric domains have been obtained recently in \cite{BHV} and the methods of proof are rather similar. Consider the standard identity
\[T_f^{(t)}T_g^{(t)} - T_{fg}^{(t)} = -(H_{\bar{f}}^{(t)})^*H_g^{(t)},\]
where $H_{\cdot}^{(t)}$ denotes the corresponding Hankel operator. The idea now is to show that $\|H_{\bar{f}}^{(t)}\|_t$ tends to $0$ if $f$ is reasonably chosen. As a benefit of this approach, we only have to assume that $\|H_g^{(t)}\|_t$ is uniformly bounded near $0$ in order to obtain
\[\lim\limits_{t \to 0} \|T_f^{(t)}T_g^{(t)} - T_{fg}^{(t)}\|_t = 0.\]
In particular, $g$ can be chosen to be an arbitrary $L^{\infty}$-function. 
\vspace{1ex}\par 
Here is a short outline of our approach: In Section \ref{Section_Notation} we fix the notation. Let $f$ be a function of bounded mean oscillation.  We generalize a norm estimate on Hankel operators in 
\cite{B} to the family of Hankel operators $H_f^{(t)}$, $t>0$ acting on differently weighted Fock spaces. The main issue here is to choose the constant which appears in the norm estimate independently 
of the weight parameter $t$. Sections \ref{UC_symbols} and \ref{VMO_symbols} contain the proof of the semi-classial limit $(*)$ for uniformly continuous operator symbols and bounded symbols in 
$\textup{VMO}(\mathbb{C}^n)$, respectively. 
In Section \ref{Section_8} of the paper we consider the algebra $\textup{Op}(L)$ of all decomposable bounded operators $X=\oplus_{t>0}X^{(t)}$ acting on the direct integral $L:= \int_{t>0}^{\oplus}L_t^2$ of standard 
Gaussian weighted $L^2$-spaces. An ideal $\mathcal{I} \subset \textup{Op}(L)$ is defined by
\begin{equation*}
\mathcal{I}:= \Big{\{} X \in \textup{Op}(L) \: : \: \lim_{t \rightarrow 0} \| X^{(t)}\|_t =0 \Big{\}}. 
\end{equation*}
Theorem \ref{Main_theorem_Section_8} shows that the set $\mathcal{A}$ of all symbols $f \in L^{\infty}(\mathbb{C}^n)$ such that the semi-commutators 
$$T_fT_g-T_{fg}\hspace{3ex} \mbox{\it and}\hspace{3ex}  T_gT_f-T_{gf}, \hspace{3ex} T_g:= \oplus_{t>0} T_g^{(t)}$$
for all $g \in L^{\infty}(\mathbb{C}^n)$  belong to the ideal $\mathcal{I}$ is a closed and conjugate-closed subalgebra of $L^{\infty}(\mathbb{C}^n)$ and it precisely coincides with $\textup{VMO}(\mathbb{C}^n) \cap L^{\infty}(\mathbb{C}^n)$. 
\vspace{1mm}\par 
In Section \ref{new_Section_6} we show that the first equation of (\ref{semiclassical_limit}) holds for all $f \in L^{\infty}(\C^n)$. In Section \ref{examples} we provide some more examples of functions that 
do not satisfy the second equation of \eqref{semiclassical_limit} (a first example was already given in \cite{BC00}) as well as some further comments. 

\section{Notation and time dependent norm estimates}
\label{Section_Notation}
\setcounter{equation}{0}
Let $n\in \mathbb{N}$ and consider the Euclidean $n$-space $\mathbb{C}^n$. With $z,w \in \mathbb{C}^n$ we denote by $\langle z, w \rangle:= z_1\overline{w}_1 + \cdots + z_n \overline{w}_n$ and $|z|=\sqrt{\langle z,z\rangle}$ the Euclidean inner product and norm, respectively. For $t > 0$ we consider the following Gaussian probability measures
\[d\mu_t(z) = \frac{1}{(4\pi t)^n}e^{-\frac{|z|^2}{4t}} \, dv(z)\]
and the corresponding function space $L^2_t := L^2(\C^n,d\mu_t)$ on $\C^n$. The closed subspace of analytic functions in $L^2_t$ is denoted by $H^2_t$ and it 
forms a Hilbert space with reproducing and normalized reproducing kernel 
\begin{equation*}
K_t(z,w)= \exp \Big{\{} \frac{\langle z,w \rangle}{4t} \Big{\}} \hspace{3ex} \mbox{\it and} \hspace{3ex} k_w^t(z)= \frac{K_t(z,w)}{\| K_t( \cdot, w)\|}=  \exp \Big{\{} \frac{\langle z,w \rangle}{4t}- \frac{|w|^2}{8t} \Big{\}}. 
\end{equation*}
\par For a measurable function $f$ and an analytic function $g \in H^2_t$ with $fg \in L^2_t$, we write
\[T_f^{(t)}g := P^{(t)}(fg) \quad \mbox{\it and} \quad H_f^{(t)}g := (I - P^{(t)})(fg)\]
for the corresponding Toeplitz and Hankel operators, respectively, where $P^{(t)}$ denotes the orthogonal projection from $L^2_t$ onto $H^2_t$. Note that $P^{(t)}$ can be expressed as the integral operator: 
\begin{equation*}
\big{[}P^{(t)}h\big{]}(z)=\int_{\mathbb{C}^n}h(u) K_t(z,u) d\mu_t(u), \hspace{4ex} h \in L_t^2. 
\end{equation*}
On $L_t^2$ we have the usual inner product 
\begin{equation*}
\langle f,g \rangle_t= \int_{\mathbb{C}^n} f \overline{g} d\mu_t \hspace{3ex} \mbox{\it with} \hspace{3ex} \| f\|_t^2= \langle f,f \rangle_t. 
\end{equation*}
For any linear operator $X$ with domain and range in $L_t^2$ we have the usual operator norm $\|X\|_t= \sup \{ \|Xf\|_t/ \|f\|_t \: : \: f \ne 0 \}$. For bounded (or essentially bounded) functions $f$ we write $\| f\|_{\infty}$ for the 
supremum (or essential supremum) of $|f|$. 
\vspace{1ex}\par 
In some of our results we allow operator symbols $f$ in the space $\textup{UC}(\mathbb{C}^n)$ which contains unbounded functions. Then the Toeplitz operator $T_f^{(t)}$  may be unbounded as well. 
Hence we have to specify the domain and carefully define operator products. This issue is discussed in \cite{B0} where a function space 
$\textup{Sym}(\mathbb{C}^n)$ and an increasing scale $(\mathcal{H}_t^n)_{n\in \mathbb{N}}$ of Hilbert spaces in $L_t^2$ are defined. It is shown that $P^{(t)}$ and the multiplication $M_f$ for all 
$f \in  \textup{Sym}(\mathbb{C}^n)$ are operators acting on this scale. In particular,  finite products of these operators are well-defined. One easily checks that $\textup{Sym}(\mathbb{C}^n)$ contains the space 
$\textup{UC}(\mathbb{C}^n)$ and therefore we can form finite products of Toeplitz operators with uniformly continuous symbols. 
\vspace{1mm}\par 
We will also need the heat transform
\begin{equation}\label{heat_transform}
\tilde{f}^{(t)}(w) := \frac{1}{(4\pi t)^n} \int_{\C^n} f(w-z)e^{-\frac{|z|^2}{4t}} \, dv(z) = \int_{\C^n} f(w-z) \, d\mu_t(z).
\end{equation}
\begin{remark} \label{heat_remark}
{\rm 
Note that $\tilde{f}^{(t)}(w)= \langle T_f^{(t)} k_w^t, k_w^t \rangle_t$ for $w \in \mathbb{C}^n$ so that $\|\tilde{f}^{(t)}\|_{\infty} \leq \| T_f^{(t)} \|_t$ by the Cauchy-Schwarz inequality. }
\end{remark}
The {\it mean oscillation} of a function $f$ is given by
\begin{equation} \label{Property_and_defn_of_MO}
\textup{MO}^t(f)(w) = (|f|^2)^{\sim(t)}(w) - |\tilde{f}^{(t)}(w)|^2 = \int_{\C^n} |f(w-z) - \tilde{f}^{(t)}(w)|^2 \, d\mu_t(z).
\end{equation}
We will say that a function $f$ has {\it bounded mean oscillation} if the semi-norm
\[\|f\|_{\textup{BMO}_{\ast}^t} := \sup\limits_{w \in \C^n} \sqrt{\textup{MO}^t(f)(w)}\]
is finite. The (linear) space of all functions having finite $\textup{BMO}_{\ast}^t$ semi-norm is denoted by 
\begin{equation}\label{defn_BMO_ast}
\textup{BMO}_{\ast}(\mathbb{C}^n):= \Big{\{} f \in L^1_{\textup{loc}}(\mathbb{C}^n) \: : \: \| f \| _{\textup{BMO}_{\ast}^t} < \infty \Big{\}}. 
\end{equation}
\par 
Recall that the right hand side (as a vector space) does not depend on the parameter $t$ (see \cite{BCI}) and therefore we do not indicate $t$ in the notation. 
A different, more standard version of $\textup{BMO}_{\ast}(\C^n)$ is considered in Section \ref{VMO_symbols}. As is well-known $\textup{BMO}_{\ast}(\C^n)$ contains unbounded functions. Hankel operators with symbols in 
$\textup{BMO}_{\ast}(\C^n)$ are considered in \cite{B} and it is shown that they are bounded on a dense domain. We mention that our  notations of the spaces $\textup{BMO}_{\ast}(\C^n)$, 
$\textup{BMO}(\mathbb{C}^n)$ and $\textup{VMO}(\mathbb{C}^n)$ in Section \ref{VMO_symbols} are different from the ones in \cite{B}. 
\vspace{1ex}\par
In the remaining part of this section we generalize a norm estimate for Hankel operators in \cite{B} to operators acting on the above family of Hilbert spaces. 
For each $t>0$ consider the operator 
\begin{equation}\label{operator_U_t}
U_t: L^2_t \rightarrow L^2_{\frac{1}{4}}: f \mapsto \big{[}U_tf\big{]}(z):= f(z2\sqrt{t}). 
\end{equation}
A simple calculation shows that $U_t$ is an isometry onto $L^2_{\frac{1}{4}}$ with inverse $U_t^{-1}= U_{\frac{1}{16 t}}$.  Moreover, $U_t$  restricts to an isometry from $H^2_t$ onto $H^2_{\frac{1}{4}}$ and for all 
$f \in L^{\infty}(\mathbb{C}^n)$ one has: 
$$U_t M_f=M_{f(\cdot 2 \sqrt{t})} U_t \hspace{6ex} \mbox{\it and } \hspace{6ex} U_tP^{(t)}=P^{(\frac{1}{4})} U_t.$$
\par 
Clearly, the first equality remains true for an unbounded operator symbol $f$ when we restrict the product of operators to the maximal domain of $M_f$. Combining these two relations gives 
\begin{align}
U_tT_f^{(t)} U_t^{-1}
&= T^{(\frac{1}{4})}_{f(\cdot 2 \sqrt{t})}, \label{Transformation_Toeplitz_operator}\\
U_t H_f^{(t)}U_t^{-1} 
&= H^{(\frac{1}{4})}_{f(\cdot 2\sqrt{t})}. \label{transformation_Hankel_Toeplitz_operator}
\end{align}
The following result is Theorem 4.2 in \cite{B}: 
\begin{theorem}\label{theorem_boundedness_Hankel_operator_t_=_1_4}(\cite{B}) 
Let $f \in \textup{BMO}_{\ast}(\mathbb{C}^n)$. Then the Hankel operator $H_f^{(\frac{1}{4})}$ is bounded and there is a constant $C>0$ independent of $f$ such that 
\begin{equation}\label{Norm_estimate_BMO_1_4}
\|H_f^{(\frac{1}{4})}\|_{\frac{1}{4}} \leq C \| f\|_{\textup{BMO}_{\ast}^{\frac{1}{4}}}. 
\end{equation}
\end{theorem}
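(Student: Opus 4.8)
The plan is to follow the approach of \cite{B}: fix $t=1/4$ and suppress the superscript (writing $\mu:=\mu_{1/4}$, $K(z,u)=e^{\langle z,u\rangle}$, $k_w$ for the normalized kernel, and $\|\cdot\|_{1/4}$ for all norms), reduce $\|H_f g\|_{1/4}$ to three pieces via a lattice splitting of the symbol, and estimate each piece using the super-Gaussian off-diagonal decay of the Fock kernel. Two elementary identities will be used repeatedly: $|k_w(z)|^2\,d\mu(z)=d\mu(z-w)$, and $|K(z,u)|^2\,d\mu(u)=e^{|z|^2}\,\frac{1}{\pi^n}e^{-|z-u|^2}\,dv(u)$. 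Throughout I work on the dense domain furnished by the scale of Hilbert spaces of \cite{B0}, which is what makes the following integral representation and all operator products legitimate when $f$ (and hence $H_f$) is unbounded: for analytic $g$ in that domain,
\[
 H_f^{(1/4)}g(z)\;=\;\int_{\C^n}\bigl(f(z)-f(u)\bigr)\,g(u)\,K(z,u)\,d\mu(u).
\]

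\textbf{The splitting.} Fix a tiling of $\C^n$ by unit cubes $\{Q_\lambda\}_{\lambda\in\Lambda}$ and set $c_\lambda:=|Q_\lambda|^{-1}\int_{Q_\lambda}f\,dv$. For $z\in Q_\lambda$ and $u\in Q_{\lambda'}$ I write $f(z)-f(u)=(f(z)-c_\lambda)+(c_\lambda-c_{\lambda'})+(c_{\lambda'}-f(u))$, which splits $H_f g=A_1g+A_2g+A_3g$. By the reproducing property $A_1g(z)=\bigl(f(z)-c_{\lambda(z)}\bigr)g(z)$; next $A_3g=-P^{(1/4)}\bigl(\sum_\lambda b_\lambda g\bigr)$ with $b_\lambda:=\chi_{Q_\lambda}(f-c_\lambda)$; and $A_2g(z)=\sum_{\lambda'}\bigl(c_{\lambda(z)}-c_{\lambda'}\bigr)\int_{Q_{\lambda'}}g(u)K(z,u)\,d\mu(u)$, the term $\lambda'=\lambda(z)$ being zero.

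\textbf{The three estimates.} Each of $\|A_1g\|_{1/4}$, $\|A_2g\|_{1/4}$, $\|A_3g\|_{1/4}$ is bounded by $C_n\|f\|_{\textup{BMO}_{\ast}^{1/4}}\|g\|_{1/4}$ with $C_n$ depending only on $n$, using three standard inputs: (i) the Fock pointwise estimate $|g(z)|^2e^{-|z|^2}\le C_n\int_{B(z,1)}|g|^2e^{-|u|^2}\,dv(u)$ for $g\in H^2_{1/4}$ (plurisubharmonicity of $|g|^2$ together with the covariance of $|g(z)|^2e^{-|z|^2}$ under the Weyl translations, which are isometries of $H^2_{1/4}$), whence $\sum_\lambda\sup_{Q_\lambda}\bigl(|g|^2e^{-|\cdot|^2}\bigr)\le C_n\|g\|_{1/4}^2$ by bounded overlap; (ii) the \emph{uniform} local bound $|Q_\lambda|^{-1}\int_{Q_\lambda}|f-c_\lambda|^2\,dv\le C_n\|f\|_{\textup{BMO}_{\ast}^{1/4}}^2$, obtained by comparing the Gaussian-weighted variance $\textup{MO}^{1/4}(f)$ with cube averages; and (iii) the Lipschitz bound $|\nabla\tilde{f}^{(1/4)}(z)|\le C_n\sqrt{\textup{MO}^{1/4}(f)(z)}$ — immediate from Cauchy--Schwarz, since $\nabla\tilde{f}^{(1/4)}$ is the convolution of $f-\tilde{f}^{(1/4)}(z)$ against a Gaussian times a linear factor — which gives $|c_\lambda-c_{\lambda'}|\le C_n\|f\|_{\textup{BMO}_{\ast}^{1/4}}(1+|\lambda-\lambda'|)$. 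Then $\|A_1g\|_{1/4}^2=\sum_\lambda\int_{Q_\lambda}|f-c_\lambda|^2|g|^2\,d\mu\le C_n\|f\|_{\textup{BMO}_{\ast}^{1/4}}^2\|g\|_{1/4}^2$ by (i)--(ii); for $A_3$ one expands $\|A_3g\|_{1/4}^2=\sum_{\lambda,\lambda'}\langle b_\lambda g,\,P^{(1/4)}(b_{\lambda'}g)\rangle_{1/4}$, estimates the diagonal by $\sum_\lambda\|b_\lambda g\|_{1/4}^2$ (again (i)--(ii)) and the off-diagonal terms by $\|b_\lambda g\|_{1/4}\|b_{\lambda'}g\|_{1/4}\,e^{-c_n|\lambda-\lambda'|^2}$ via $\|K(z,\cdot)\chi_{Q_{\lambda'}}\|_{1/4}^2\le e^{|z|^2}e^{-c_n|\lambda-\lambda'|^2}$, which are absorbed into the diagonal; and for $A_2$ the same kernel bound together with (iii) reduces the estimate, through Schur's test for the super-exponentially decaying matrix $\bigl((1+|\lambda-\lambda'|)e^{-c_n|\lambda-\lambda'|^2}\bigr)_{\lambda,\lambda'}$ on $\ell^2(\Lambda)$, to $\sum_{\lambda'}\|g\chi_{Q_{\lambda'}}\|_{1/4}^2\le\|g\|_{1/4}^2$. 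Summing and extending by density yields $\|H_f^{(1/4)}\|_{1/4}\le C_n\|f\|_{\textup{BMO}_{\ast}^{1/4}}$.

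\textbf{Expected obstacle.} The only genuinely delicate point is input (ii) (and the Lipschitz bound (iii)): one must pass between the \emph{intrinsic}, Gaussian-weighted semi-norm $\|\cdot\|_{\textup{BMO}_{\ast}^{1/4}}$ and ordinary cube-average oscillation \emph{with the comparison constant independent of $f$}, which is where the fact recorded after \eqref{defn_BMO_ast} (see \cite{BCI}) that $\textup{BMO}_{\ast}(\C^n)$ does not depend on the weight has to be used quantitatively. Once (ii)--(iii) are in hand, the remainder is the routine interplay of the Gaussian off-diagonal decay of the Fock kernel, the sub-mean-value property of $|g|^2$, and Schur's test.
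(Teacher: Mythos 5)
The paper does not actually prove this statement: it is quoted as Theorem~4.2 of \cite{B} with no argument supplied, so there is no in-paper proof against which to compare. Judged on its own, your sketch is a correct and essentially standard route to the boundedness of Fock--Hankel operators with $\textup{BMO}_{\ast}$ symbol, and each of the three inputs you invoke is genuinely available with $n$-dependent constants. The integral representation $H_f^{(1/4)}g(z)=\int(f(z)-f(u))g(u)K(z,u)\,d\mu(u)$ on the dense domain of \cite{B0} is exactly right, as is the lattice splitting $f(z)-f(u)=(f(z)-c_{\lambda(z)})+(c_{\lambda(z)}-c_{\lambda'(u)})+(c_{\lambda'(u)}-f(u))$ and the reduction to $A_1,A_2,A_3$. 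Input (i) is the sub-mean-value property of $|g|^2$ combined with Weyl translations; input (iii) is precisely the Lipschitz estimate $|\nabla\tilde f^{(1/4)}|\le C_n\sqrt{\textup{MO}^{1/4}(f)}$ already recorded in Proposition~\ref{Lipschitz_continuity_heat_transform}; input (ii) is the quantitative comparison between the Gaussian-weighted and the unweighted cube variance at the fixed scale $t=1/4$, which follows because the weight $\pi^{-n}e^{-|z|^2}$ is bounded below on a ball containing $Q_\lambda$ and because replacing $\tilde f^{(1/4)}(\lambda)$ by $c_\lambda$ only decreases the variance (cf.\ Lemma~\ref{estimate_hilf_mean_oscillation} and Lemma~\ref{Lemma_estimate_Var_by_MO} of the paper, which give exactly this). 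I would only note two points. First, your ``expected obstacle'' is not quite the $t$-independence of $\textup{BMO}_{\ast}(\C^n)$ recorded after \eqref{defn_BMO_ast}, which concerns different weight parameters; what you actually need is the fixed-$t$ comparison between Gaussian and Lebesgue local oscillation, which is easier and already implicit in the paper's Section~\ref{VMO_symbols}. Second, the estimate for $A_3=-P^{(1/4)}\bigl(\sum_\lambda b_\lambda g\bigr)$ can be streamlined: since $P^{(1/4)}$ is a contraction and the $b_\lambda$ have disjoint supports, one has directly $\|A_3g\|_{1/4}^2\le\sum_\lambda\|b_\lambda g\|_{1/4}^2$, so the off-diagonal kernel bound $\|K(z,\cdot)\chi_{Q_{\lambda'}}\|_{1/4}^2\le C_n e^{|z|^2}e^{-c_n|\lambda-\lambda'|^2}$ is only needed for $A_2$. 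With these cosmetic remarks, the sketch is a sound blind reconstruction of the cited result.
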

We can use the family of isometries $(U_t)_{t >0}$ between the differently weighted $L^2$-spaces to generalize the inequality (\ref{Norm_estimate_BMO_1_4}) with a $t$-independent constant $C$. 
Note that for $z \in \mathbb{C}^n$: 
\begin{equation*}
\Big{\{} f(\cdot 2 \sqrt{t}) \widetilde{\Big{\}}}^{(\frac{1}{4})}(z)=
 \tilde{f}^{(t)}\big{(}z 2 \sqrt{t}\big{)} 
\end{equation*}
and therefore we obtain the identity: 
\begin{equation} \label{identity_BMO_new}
\|f(\cdot 2\sqrt{t})\|_{\textup{BMO}_{\ast}^{\frac{1}{4}}} = \|f\|_{\textup{BMO}_{\ast}^t}.
\end{equation}
\begin{corollary}\label{main_theorem_section_2} 
Let $f \in \textup{BMO}_*(\mathbb{C}^n)$. Then there is a constant $C>0$ independent of $t>0$ and $f$ such that 
\begin{equation}
\label{uniform_estimate_Hankel_operators}
\|H_f^{(t)}\|_t \leq C \|f\|_{\textup{BMO}_{\ast}^t}. 
\end{equation}
\end{corollary}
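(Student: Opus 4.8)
The plan is to reduce everything to the already-established case $t=\tfrac14$ by conjugating with the dilation isometry $U_t$, and then track how the relevant norms transform. Since $U_t\colon L^2_t\to L^2_{\frac14}$ is an isometry (hence so is $U_t^{-1}=U_{\frac{1}{16t}}$), conjugation by $U_t$ preserves operator norms; combined with the intertwining relation \eqref{transformation_Hankel_Toeplitz_operator} this gives the identity
\[
\|H_f^{(t)}\|_t=\|U_tH_f^{(t)}U_t^{-1}\|_{\frac14}=\|H^{(\frac14)}_{f(\cdot\,2\sqrt t)}\|_{\frac14}.
\]

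Next I would invoke Theorem \ref{theorem_boundedness_Hankel_operator_t_=_1_4}. The point to emphasize here is that the constant $C$ in \eqref{Norm_estimate_BMO_1_4} is \emph{independent of the symbol}; applying the theorem to the dilated symbol $f(\cdot\,2\sqrt t)$ (which indeed lies in $\textup{BMO}_\ast(\mathbb C^n)$, as \eqref{identity_BMO_new} shows its $\textup{BMO}_\ast^{\frac14}$ seminorm is finite) yields
\[
\|H^{(\frac14)}_{f(\cdot\,2\sqrt t)}\|_{\frac14}\le C\,\|f(\cdot\,2\sqrt t)\|_{\textup{BMO}_\ast^{\frac14}}
\]
with the \emph{same} $C$ for every $t>0$. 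Finally, substituting the seminorm identity \eqref{identity_BMO_new}, namely $\|f(\cdot\,2\sqrt t)\|_{\textup{BMO}_\ast^{\frac14}}=\|f\|_{\textup{BMO}_\ast^{t}}$, and chaining the three displays produces exactly the claimed estimate $\|H_f^{(t)}\|_t\le C\|f\|_{\textup{BMO}_\ast^{t}}$ with $C$ independent of both $t$ and $f$.

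There is no real obstacle in this argument once Theorem \ref{theorem_boundedness_Hankel_operator_t_=_1_4} and the two transformation identities are in hand; the only thing that genuinely matters is the uniformity of the constant, and that is guaranteed precisely because (i) $C$ in the $t=\tfrac14$ estimate does not depend on the symbol and (ii) the scaling map $U_t$ acts isometrically on the $L^2$-spaces \emph{and} exactly preserves the $\textup{BMO}_\ast$ seminorm under the corresponding change of variables. In other words, all the work has already been done in establishing \eqref{Transformation_Toeplitz_operator}--\eqref{identity_BMO_new} and Theorem \ref{theorem_boundedness_Hankel_operator_t_=_1_4}; the corollary is the formal bookkeeping that combines them.
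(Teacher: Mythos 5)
Your argument is correct and is exactly the paper's proof: conjugate by the isometry $U_t$ using \eqref{transformation_Hankel_Toeplitz_operator}, apply Theorem \ref{theorem_boundedness_Hankel_operator_t_=_1_4} to the dilated symbol, and convert back via the seminorm identity \eqref{identity_BMO_new}, noting that the constant from the $t=\tfrac14$ case is symbol-independent and hence uniform in $t$. No differences to report.
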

\begin{proof}
We combine Theorem \ref{theorem_boundedness_Hankel_operator_t_=_1_4} with (\ref{transformation_Hankel_Toeplitz_operator}) and (\ref{identity_BMO_new}): 
\begin{align*}
\|H_f^{(t)} \|_t 
&= \| U_t H_f^{(t)} U_t^{-1}\|_{\frac{1}{4}}
= \| H_{f(\cdot 2 \sqrt{t})}^{(\frac{1}{4})}\|_{\frac{1}{4}}\leq C \| f(\cdot 2 \sqrt{t})\|_{\textup{BMO}_{\ast}^{\frac{1}{4}}}=C\|f\|_{\textup{BMO}_{\ast}^t}, 
\end{align*}
where $C>0$ is the constant in Theorem \ref{theorem_boundedness_Hankel_operator_t_=_1_4} which is independent of $t$ and $f$. 
\end{proof}
\section{Operators with uniformly continuous symbols}
\label{UC_symbols}
\setcounter{equation}{0}
In the following we denote by $\textup{UC}(\mathbb{C}^n)$ the space of all uniformly continuous complex valued functions on $\mathbb{C}^n$. Moreover, we write 
$$\textup{BUC}(\mathbb{C}^n)= \textup{UC}(\mathbb{C}^n) \cap L^{\infty}(\mathbb{C}^n)$$
for the $C^*$ algebra of all bounded uniformly continuous functions. The next two propositions recall some results from \cite{BC1}:

\begin{proposition} \label{Lipschitz_continuity_heat_transform}
Let $f \in \textup{UC}(\C^n)$. Then $f$ is in $\textup{BMO}_{\ast}(\C^{n})$, $\tilde{f}^{(t)}$ is Lipschitz continuous and $f - \tilde{f}^{(t)}$ is bounded. Moreover, the Lipschitz constant of $\tilde{f}^{(t)}$ is 
bounded by $\frac{1}{\sqrt{t}}\|f\|_{\textup{BMO}_{\ast}^t}$.
\end{proposition}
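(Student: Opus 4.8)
The plan is to establish the three assertions in turn, the single recurring tool being the modulus of continuity $\omega(s):=\sup_{|u-v|\le s}|f(u)-f(v)|$ of $f$. Uniform continuity makes $\omega$ finite on $[0,\infty)$ and subadditive, so $\omega(s)\le(1+s)\,\omega(1)$; in particular $f$ has at most linear growth, and since $\mu_t$ has finite moments of all orders we obtain $\int_{\C^n}\omega(|z|)^k\,d\mu_t(z)<\infty$ for $k=1,2$, which is the only integrability input we shall need.

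That $f\in\textup{BMO}_{\ast}(\C^n)$ and that $f-\tilde f^{(t)}$ is bounded both follow at once: because $\tilde f^{(t)}(w)$ is the $\mu_t$-mean of $z\mapsto f(w-z)$ it minimises the $L^2(\mu_t)$-distance, so
\[\textup{MO}^t(f)(w)\le\int_{\C^n}|f(w-z)-f(w)|^2\,d\mu_t(z)\le\int_{\C^n}\omega(|z|)^2\,d\mu_t(z),\]
a bound independent of $w$, whence $\|f\|_{\textup{BMO}_{\ast}^t}<\infty$; and $|f(w)-\tilde f^{(t)}(w)|\le\int_{\C^n}\omega(|z|)\,d\mu_t(z)$ by the same computation without the square, again uniformly in $w$. (Continuity of $f$ gives $f\in L^1_{\textup{loc}}(\C^n)$, the remaining requirement in the definition of $\textup{BMO}_{\ast}$.)

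For the Lipschitz bound I would regard $\tilde f^{(t)}=f\ast\varphi_t$ as a convolution with the Schwartz function $\varphi_t(z)=(4\pi t)^{-n}e^{-|z|^2/4t}$; since $f$ grows at most linearly, differentiation under the integral is legitimate and $\tilde f^{(t)}\in C^{\infty}(\C^n)$. Identifying $\C^n$ with $\R^{2n}$ and writing $z\cdot e$ for the Euclidean inner product, one has $\partial_e\varphi_t(z)=-\tfrac{1}{2t}(z\cdot e)\varphi_t(z)$, and since $\int_{\C^n}(z\cdot e)\,d\mu_t(z)=0$ we may subtract a constant inside the integral to get the centred formula
\[\partial_e\tilde f^{(t)}(w)=-\frac{1}{2t}\int_{\C^n}\bigl(f(w-z)-\tilde f^{(t)}(w)\bigr)(z\cdot e)\,d\mu_t(z).\]
Cauchy--Schwarz bounds the right-hand side by $\frac{1}{2t}\sqrt{\textup{MO}^t(f)(w)}\,\bigl(\int_{\C^n}(z\cdot e)^2\,d\mu_t(z)\bigr)^{1/2}$; as the $\mu_t$-marginal in every direction is a centred Gaussian of variance $2t$, the last factor equals $\sqrt{2t}$, so $|\partial_e\tilde f^{(t)}(w)|\le\frac{1}{\sqrt{2t}}\|f\|_{\textup{BMO}_{\ast}^t}\le\frac{1}{\sqrt t}\|f\|_{\textup{BMO}_{\ast}^t}$ for every $w$ and every unit vector $e$. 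Integrating this directional derivative along the segment joining two points yields the asserted Lipschitz estimate.

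I do not anticipate a serious obstacle: the argument is a chain of elementary estimates. The only points needing a little care are the justification of differentiation under the integral (covered by the linear growth of $f$ and the rapid decay of $\nabla\varphi_t$) and bookkeeping the Gaussian normalisation so that $\int_{\C^n}(z\cdot e)^2\,d\mu_t(z)=2t$ comes out correctly; correspondingly, the one genuinely useful idea is the centring step -- replacing $f(w-z)$ by $f(w-z)-\tilde f^{(t)}(w)$ -- which is exactly what turns the crude bound $|\partial_e\tilde f^{(t)}|\le\frac{1}{2t}\int|f(w-z)|\,|z\cdot e|\,d\mu_t$ into one controlled by the mean oscillation rather than by the (possibly unbounded) function $f$ itself.
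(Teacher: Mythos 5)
Your argument is correct and, unlike the paper's proof (which mostly reduces to citations: \cite[Lemma 2.1]{BC1} and \cite[Proposition 3.1]{BC1} for the first two claims, and \cite[Corollary 2.7]{B} together with the scaling isometry $U_t$ of \eqref{operator_U_t} and the identity \eqref{identity_BMO_new} for the Lipschitz constant), it is essentially self-contained. The first two parts are fine: uniform continuity gives a finite, subadditive modulus $\omega$, hence at most linear growth, hence the integrability of $\omega(|z|)^k$ against $d\mu_t$; and the minimizing property of the mean (cf.\ the paper's Lemma \ref{estimate_hilf_mean_oscillation}) gives $\textup{MO}^t(f)(w)\le\int\omega(|z|)^2\,d\mu_t$, bounded uniformly in $w$. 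For the Lipschitz bound your ``centring + Cauchy--Schwarz'' computation is exactly the idea one would use to prove the cited \cite[Corollary 2.7]{B} directly; the paper instead imports the $t=\tfrac14$ case from that reference and transports it with $U_t$. Your direct computation is cleaner and even yields the slightly sharper constant $\tfrac{1}{\sqrt{2t}}\|f\|_{\textup{BMO}_{\ast}^t}$, since the $\mu_t$-marginal in any real direction is $N(0,2t)$, not $N(0,t)$. The one small thing worth flagging is the justification that $\tilde f^{(t)}(w)$ is well defined and that differentiation under the integral sign is legitimate; you do address it (linear growth of $f$ versus rapid decay of $\nabla\varphi_t$), and that is indeed sufficient. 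So: correct proof, genuinely different in presentation -- self-contained direct estimates instead of citation plus the $U_t$-scaling trick -- and it buys a marginally better constant.
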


\begin{proof}
See \cite[Lemma 2.1]{BC1} and \cite[Proposition 3.1]{BC1} for the first two statements. In order to estimate the Lipschitz constant of $\tilde{f}^{(t)}$ recall that in the proof of 
\cite[Proposition 3.1]{BC1} (see also \cite[Corollary 2.7]{B}) it was shown that for all $t>0$: 
\begin{equation}\label{Lipschitz_estimat_f_tilde_t}
\big{|} \tilde{f}^{(t)}(z)-\tilde{f}^{(t)}(w)\big{|} \leq t^{-\frac{1}{2}}\| f(\cdot 2\sqrt{t})\|_{\textup{BMO}_{\ast}^{\frac{1}{4}}} |z-w|=t^{-\frac{1}{2}} \|f\|_{\textup{BMO}_{\ast}^t} |z-w|. 
\end{equation}
\par 
In the last equality we have use the identity (\ref{identity_BMO_new}) again.  
\end{proof}

\begin{proposition} \label{Lemma_convergence_heat_transform_UC_functions}
Let $f \in \textup{UC}(\C^n)$. Then $\tilde{f}^{(t)} \to f$ uniformly on $\C^n$. 
\end{proposition}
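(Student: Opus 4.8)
The plan is to show that the heat transform $\tilde f^{(t)}$ converges uniformly to $f$ for $f \in \textup{UC}(\C^n)$ by exploiting the probabilistic averaging structure of \eqref{heat_transform}. The key observation is that $\tilde f^{(t)}(w) - f(w) = \int_{\C^n} \bigl(f(w-z) - f(w)\bigr)\, d\mu_t(z)$, since $\mu_t$ is a probability measure. Thus
\[
\bigl| \tilde f^{(t)}(w) - f(w) \bigr| \leq \int_{\C^n} |f(w-z) - f(w)| \, d\mu_t(z),
\]
and the right-hand side is bounded \emph{uniformly in $w$} by the modulus of continuity of $f$ integrated against $d\mu_t$.

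First I would fix $\epsilon > 0$ and use uniform continuity to choose $\delta > 0$ such that $|f(u) - f(v)| < \epsilon$ whenever $|u - v| < \delta$. Then I would split the integral over the region $\{|z| < \delta\}$ and its complement. On $\{|z| < \delta\}$ the integrand is at most $\epsilon$, contributing at most $\epsilon$ to the bound. On $\{|z| \geq \delta\}$ I would use the linear growth bound on $f$ coming from Proposition \ref{Lipschitz_continuity_heat_transform} (a $\textup{UC}$ function grows at most linearly, since $f - \tilde f^{(t)}$ is bounded and $\tilde f^{(t)}$ is Lipschitz), so that $|f(w-z) - f(w)| \leq a|z| + b$ for constants $a, b$ depending only on $f$; hence this tail contributes at most $\int_{|z| \geq \delta} (a|z| + b)\, d\mu_t(z)$, which is independent of $w$ and tends to $0$ as $t \to 0$ by dominated convergence (the Gaussian mass concentrates at the origin). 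Choosing $t$ small enough makes the total bound less than $2\epsilon$ uniformly in $w$.

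**The main obstacle** is handling the tail $\{|z| \geq \delta\}$: one cannot simply bound $|f(w-z) - f(w)|$ by a constant there, because $f$ need not be bounded. The resolution is precisely the linear growth estimate supplied by the previous proposition, which converts the tail into a fixed Gaussian moment integral $\int_{|z|\ge\delta}(a|z|+b)\,d\mu_t(z)$; after the substitution $z = 2\sqrt{t}\,\zeta$ this becomes $\int_{|\zeta| \geq \delta/(2\sqrt t)} (2a\sqrt t\,|\zeta| + b)\, d\mu_{1/4}(\zeta)$, which visibly vanishes as $t \to 0$. Everything else is the standard approximate-identity argument. (If $f$ were merely assumed bounded rather than uniformly continuous the conclusion would fail, so the uniform continuity is used twice: once for the modulus of continuity on the small ball, and implicitly via the previous proposition for the growth control.)
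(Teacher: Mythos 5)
Your proof is correct and gives a self-contained argument where the paper simply cites \cite[Proposition 3.2]{BC1}. The decomposition into $\{|z|<\delta\}$ and $\{|z|\geq\delta\}$, with the modulus of continuity controlling the small ball and a linear growth bound controlling the Gaussian tail, is exactly the technique the paper itself uses one step later in Proposition~\ref{Lemma_limit_BMO_lambda_UC}, so it fits the surrounding context well. One small streamlining: the linear growth bound $|f(w-z)-f(w)|\leq a|z|+b$ need not be imported from Proposition~\ref{Lipschitz_continuity_heat_transform} (and doing so is slightly heavier, and invites a circularity worry if the cited source derives boundedness of $f-\tilde f^{(t)}$ from the very convergence you are proving). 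It follows directly from uniform continuity by the standard chaining argument: fix $\delta_0>0$ with $|f(x)-f(y)|\leq 1$ whenever $|x-y|\leq\delta_0$, then interpolate along the segment from $x$ to $y$ in at most $\lceil |x-y|/\delta_0\rceil$ steps of length $\leq\delta_0$ to get $|f(x)-f(y)|\leq |x-y|/\delta_0 + 1$. With that in hand your tail estimate, after the substitution $z=2\sqrt{t}\,\zeta$, becomes
\[
\int_{|z|\geq\delta}(a|z|+b)\,d\mu_t(z)=\frac{1}{\pi^n}\int_{|\zeta|\geq\delta/(2\sqrt t)}\bigl(2a\sqrt t\,|\zeta|+b\bigr)e^{-|\zeta|^2}\,dv(\zeta),
\]
which is independent of $w$ and tends to $0$ as $t\to 0$ by dominated convergence, and the argument closes cleanly.
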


\begin{proof}
See \cite[Proposition 3.2]{BC1}. 
\end{proof}

\begin{proposition} \label{Lemma_limit_BMO_lambda_UC}
Let $f \in \textup{UC}(\C^n)$. Then $\|f\|_{\textup{BMO}_{\ast}^t} \to 0$ as $t \to 0$. 
\end{proposition}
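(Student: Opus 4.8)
The plan is to bound $\|f\|_{\textup{BMO}_{\ast}^t}$ by an expression that manifestly goes to $0$ as $t\to 0$, exploiting that uniform continuity makes the local oscillation of $f$ small on balls of small radius while the Gaussian measure $\mu_t$ concentrates near the origin. Concretely, recall from \eqref{Property_and_defn_of_MO} that for any function $f$ one has the pointwise bound $\textup{MO}^t(f)(w) \le \int_{\C^n} |f(w-z) - f(w)|^2 \, d\mu_t(z)$, since the integral defining $\textup{MO}^t(f)(w)$ minimises $\int |f(w-z) - c|^2 d\mu_t(z)$ over constants $c$ and $c = f(w)$ is an admissible competitor. This already reduces matters to showing that $\sup_{w}\int_{\C^n}|f(w-z)-f(w)|^2\,d\mu_t(z)\to 0$.

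First I would use uniform continuity: given $\varepsilon > 0$, pick $\delta > 0$ so that $|f(u) - f(v)| < \varepsilon$ whenever $|u - v| < \delta$. Split the integral over $\{|z| < \delta\}$ and $\{|z| \ge \delta\}$. On the first piece the integrand is at most $\varepsilon^2$, so that part contributes at most $\varepsilon^2$ uniformly in $w$. For the second piece, I need a crude growth bound on $|f(w-z)-f(w)|$; uniform continuity with modulus at scale $\delta$ gives the standard linear-growth estimate $|f(u)-f(v)| \le \omega(\delta)\big(\frac{|u-v|}{\delta} + 1\big)$, hence $|f(w-z)-f(w)|^2 \le C_\delta(|z|^2 + 1)$ with $C_\delta$ independent of $w$. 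Then the tail $\int_{|z|\ge\delta} C_\delta(|z|^2+1)\,d\mu_t(z)$ tends to $0$ as $t\to 0$ because the Gaussian mass, and the Gaussian second moment restricted to $\{|z|\ge\delta\}$, both vanish as $t\to 0$ (this is an elementary computation with $d\mu_t$, e.g.\ via the substitution $z = 2\sqrt t\, u$, under which $\mu_t$ becomes the fixed measure $\mu_{1/4}$ and $\{|z|\ge\delta\}$ becomes $\{|u|\ge\delta/(2\sqrt t)\}$, a set receding to infinity). Combining, $\limsup_{t\to 0}\|f\|_{\textup{BMO}_\ast^t}^2 \le \varepsilon^2$, and since $\varepsilon$ was arbitrary the limit is $0$.

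Alternatively, and perhaps more in the spirit of the preceding propositions, one can argue via the triangle inequality in $L^2(\mu_t)$: write $f(w-z)-\tilde f^{(t)}(w) = (f(w-z)-f(w)) + (f(w)-\tilde f^{(t)}(w))$, so that $\sqrt{\textup{MO}^t(f)(w)} \le \big(\int|f(w-z)-f(w)|^2 d\mu_t(z)\big)^{1/2} + |f(w)-\tilde f^{(t)}(w)|$. The second term is controlled uniformly by $\|f-\tilde f^{(t)}\|_\infty$, which tends to $0$ by Proposition \ref{Lemma_convergence_heat_transform_UC_functions}, and the first term is handled by the split above. Either way the proof is short once the Gaussian tail estimate is in place.

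The only real obstacle is the uniformity in $w$: one must make sure the constants coming from the growth bound on $f$ and from the Gaussian tail do not depend on the base point $w$, which is exactly what uniform (as opposed to merely pointwise) continuity supplies. There is no analytic subtlety beyond that; everything else is the concentration of $\mu_t$ at the origin as $t \to 0$, which is immediate from the scaling $z \mapsto 2\sqrt t\,u$ used already in Section \ref{Section_Notation}.
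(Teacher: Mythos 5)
Your proof is correct, and the core decomposition is the same as the paper's: split $\textup{MO}^t(f)(w)$ at the radius $\delta$ furnished by uniform continuity, get $O(\epsilon^2)$ on the near part, and kill the Gaussian tail on the far part using a linear growth bound $|f(w-z)-f(w)|\le C_\delta(1+|z|)$. The differences are in which intermediate facts you invoke, and they are worth noting. First, you replace the paper's appeal to Proposition \ref{Lemma_convergence_heat_transform_UC_functions} (uniform convergence $\tilde f^{(t)}\to f$) by the variational observation that $\tilde f^{(t)}(w)$ minimises $c\mapsto\int|f(w-z)-c|^2\,d\mu_t(z)$, so one may insert the competitor $c=f(w)$; this is exactly the content of Lemma \ref{estimate_hilf_mean_oscillation}, which the paper only states and uses later, and it makes your proof independent of the heat-transform convergence result. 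Second, you derive the linear growth bound directly from the modulus of continuity via the standard chaining estimate $|f(u)-f(v)|\le\omega(\delta)(|u-v|/\delta+1)$, whereas the paper cites Proposition \ref{Lipschitz_continuity_heat_transform} ($f$ is a sum of a bounded and a Lipschitz function); both yield the same bound, uniformly in $w$. Your tail estimate via the substitution $z=2\sqrt t\,u$, under which the domain becomes $\{|u|\ge\delta/(2\sqrt t)\}$ and the second moment of the fixed Gaussian restricted to a receding set vanishes, is a clean and correct alternative to the paper's device of pulling out the factor $e^{-\delta^2/(8t)}$. In short: same strategy, marginally more self-contained inputs, no gaps.
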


\begin{proof}
Let $\frac{1}{4} > \epsilon > 0$ be fixed and choose $\delta >0$ such that $|f(z)-f(w)| < \epsilon$ for all $z,w \in \C^n$ with $|z-w| < \delta$. We divide the domain of integration into two parts: 
\begin{equation} \label{decomposition_MO_lambda}
\textup{MO}^t(f)(w)
= \Big{\{}\int_{|z| < \delta} + \int_{|z| \geq \delta} \Big{\}}|f(w-z) - \tilde{f}^{(t)}(w)|^2 \, d\mu_t(z).
\end{equation}
By Proposition \ref{Lemma_convergence_heat_transform_UC_functions}, we have $\|f - \tilde{f}^{(t)}\|_{\infty} \leq \epsilon$ for sufficiently small $t$. Thus the integrand can be estimated as follows:
\begin{align*}
|f(w-z) - \tilde{f}^{(t)}(w)|^2 
&\leq \left(|f(w-z) - f(w)| + |f(w) - \tilde{f}^{(t)}(w)|\right)^2 \\
&\leq \left(|f(w-z) - f(w)| + \epsilon\right)^2.
\end{align*}

In the case $|z| < \delta$ we get $|f(w-z) - f(w)| < \epsilon$ and thus
\[\int_{|z| < \delta} |f(w-z) - \tilde{f}^{(t)}(w)|^2 \, d\mu_t(z) \leq 4\epsilon^2 < \epsilon\]
for sufficiently small $t$.

In the case $|z| \geq \delta$ we observe that $f$ is the sum of a bounded and a Lipschitz continuous function (cf.~Proposition \ref{Lipschitz_continuity_heat_transform}) and hence 
$|f(w-z) - f(w)| \leq C(1+|z|)$ for a suitable constant $C$. Now,
\begin{align*}
\int_{|z| \geq \delta} |f(w-z) - \tilde{f}^{(t)}(w)|^2 \, d\mu_t(z) &\leq \frac{1}{(4\pi t)^n} \int_{|z| \geq \delta} (C(1+|z|) + \epsilon)^2 e^{-\frac{|z|^2}{4t}} \, dv(z)\\
&\leq \frac{1}{(4\pi t)^n}e^{-\frac{\delta^2}{8t}} \int_{|z| \geq \delta} (C(1+|z|) + \epsilon)^2 e^{-|z|^2} \, dv(z)\\
\end{align*}
for $t \leq \frac{1}{8}$. As the integral on the right-hand side is bounded, the whole term converges to $0$ as $t \to 0$. We conclude $\textup{MO}^t(f)(w) \to 0$ uniformly in $w$. Thus the assertion follows.
\end{proof}
 
\begin{theorem} \label{Theorem_behaviour_semi_commutator_uniformly_bounded_function}
Let $f \in \textup{UC}(\C^n)$. Then $\lim\limits_{t \to 0} \|H_f^{(t)}\|_t = 0$. In particular,  
\begin{equation}\label{limit_semi_commutator}
\lim\limits_{t \to 0} \big{\|}T_f^{(t)}T_g^{(t)} - T_{fg}^{(t)}\big{\|}_t = 0 
\end{equation}
for all $g \in L^{\infty}(\C^n)$ or all $g \in \textup{UC}(\C^n)$. 
\end{theorem}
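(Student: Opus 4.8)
The plan is to deduce everything from the $t$-uniform Hankel norm estimate (Corollary \ref{main_theorem_section_2}) together with the fact that the $\textup{BMO}_\ast^t$ seminorm of a uniformly continuous function vanishes as $t\to 0$ (Proposition \ref{Lemma_limit_BMO_lambda_UC}).

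First I would prove the Hankel statement: for $f\in\textup{UC}(\C^n)$ Proposition \ref{Lipschitz_continuity_heat_transform} gives $f\in\textup{BMO}_\ast(\C^n)$, so Corollary \ref{main_theorem_section_2} applies and yields
\[
\|H_f^{(t)}\|_t \leq C\,\|f\|_{\textup{BMO}_\ast^t}
\]
with $C$ independent of $t$. By Proposition \ref{Lemma_limit_BMO_lambda_UC} the right-hand side tends to $0$ as $t\to 0$, hence $\|H_f^{(t)}\|_t\to 0$. (One should keep in mind that for unbounded $f$ the operator $H_f^{(t)}$ is only defined on a dense domain, as discussed in the notation section via the scale $\mathcal H_t^n$; the norm estimate is understood on that domain, and the same applies to the products below.)

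Next I would pass to the semi-commutator. The starting point is the algebraic identity
\[
T_f^{(t)}T_g^{(t)} - T_{fg}^{(t)} = -(H_{\bar f}^{(t)})^{\ast} H_g^{(t)},
\]
valid on the appropriate dense domain, which gives
\[
\big\|T_f^{(t)}T_g^{(t)} - T_{fg}^{(t)}\big\|_t \leq \|H_{\bar f}^{(t)}\|_t\,\|H_g^{(t)}\|_t .
\]
Since $f\in\textup{UC}(\C^n)$ implies $\bar f\in\textup{UC}(\C^n)$, the first factor tends to $0$ by the first part of the theorem. For the second factor I only need $\|H_g^{(t)}\|_t$ to stay bounded as $t\to 0$: if $g\in L^{\infty}(\C^n)$ then $\|H_g^{(t)}\|_t\leq \|M_g\|_t\leq\|g\|_\infty$ uniformly in $t$ (alternatively, $g\in L^\infty$ has uniformly bounded $\textup{BMO}_\ast^t$ seminorm, so Corollary \ref{main_theorem_section_2} again gives a $t$-independent bound); if instead $g\in\textup{UC}(\C^n)$ then in fact $\|H_g^{(t)}\|_t\to 0$ by the first part, which is more than enough. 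In either case the product tends to $0$, proving \eqref{limit_semi_commutator}.

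I do not expect a serious obstacle here: the genuinely hard work has already been done in establishing the $t$-independence of the constant $C$ in Corollary \ref{main_theorem_section_2} and the decay of $\|f\|_{\textup{BMO}_\ast^t}$ in Proposition \ref{Lemma_limit_BMO_lambda_UC}. The only point requiring mild care is making the factorization identity and the operator-norm submultiplicativity rigorous when $f$ is unbounded, i.e.\ checking that all the operators involved act on a common dense domain (the scale $\mathcal H_t^n$) so that $\langle H_g^{(t)}\phi, H_{\bar f}^{(t)}\psi\rangle_t$ computations are legitimate; this is exactly the framework set up in the notation section, so it is bookkeeping rather than a real difficulty.
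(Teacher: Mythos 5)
Your argument is correct and follows essentially the same route as the paper: the identity $T_f^{(t)}T_g^{(t)} - T_{fg}^{(t)} = -(H_{\bar f}^{(t)})^{*}H_g^{(t)}$ reduces everything to showing $\|H_f^{(t)}\|_t \to 0$, which follows from Corollary \ref{main_theorem_section_2} combined with Proposition \ref{Lemma_limit_BMO_lambda_UC}. Your additional remarks on the bound $\|H_g^{(t)}\|_t \le \|g\|_\infty$ and on the domain issues for unbounded symbols are correct and merely make explicit what the paper leaves implicit.
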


\begin{proof}
Because of $T_f^{(t)}T_g^{(t) }- T_{fg}^{(t)} = -(H_{\bar{f}}^{(t)})^*H_g^{(t)}$, it is sufficient to show 
\begin{equation}\label{limit_Hankel_operator_UC}
\lim\limits_{t \to 0} \| H_f^{(t)}\|_t = 0.
\end{equation}
This follows from Corollary \ref{main_theorem_section_2} in combination with Proposition \ref{Lemma_limit_BMO_lambda_UC}. 
\end{proof}


Another interesting question is: For which symbols $f$ and $g$ are the semi-commutators $T_f^{(t)}T_g^{(t)} - T_{fg}^{(t)}$ in \eqref{limit_semi_commutator} compact? 
In other words, what is the maximal algebra $\mathcal{T}$ generated by Toeplitz operators so that the Calkin algebra $\mathcal{T} / \mathcal{K}(H_t^2)$ is commutative? 
For $\textup{BUC}$-symbols we will answer these questions in Proposition \ref{compact_semi_commutators} below. Therefore we need the following notions:

For a bounded and continuous function $f: \C^n \to \C$ (in short: $f \in \textup{BC}(\C^n)$) define
\[\textup{Osc}_z(f) := \sup\{|f(z)-f(w)| : |z-w| < 1\}.\]
The space of functions having vanishing oscillation is 
$$\textup{VO}(\C^n) := \big{\{}f \in \textup{BC}(\C^n) : \textup{Osc}_z(f) \to 0 \text{ as } |z| \to \infty\big{\}}.$$ 
Note that in fact $\textup{VO}(\C^n) \subset \textup{BUC}(\C^n)$. Indeed, for $f \in \textup{VO}(\C^n)$ and $\epsilon > 0$ we may choose a compact subset $K \subset \C^n$ such that $\textup{Osc}_z(f) < \epsilon$ for all $z \in \C^n \setminus K$. As $f$ is uniformly continuous on compact sets, there is a $\delta_K$ so that $|f(z)-f(w)| < \epsilon$ for all $z \in K,w \in \C^n$ with $|z-w| < \delta_K$. Thus $|f(z)-f(w)| < \epsilon$ for all $z,w \in \C^n$ with $|z-w| < \min\{\delta_K,1\}$.

Here is one possible way of constructing functions in $\textup{VO}(\C^n)$:

\begin{example}
Choose $f \in \textup{BUC}(\R)$ and compose it with a function $g : \C^n \to \R$ that is continuous on $\C^n$, differentiable in a neighborhood of $\infty$ and its derivative $g'$ tends to $0$ as $|z| \to \infty$. To show that $f \circ g$ is in $\textup{VO}(\C^n)$, choose $\epsilon > 0$ and select $\delta > 0$ such that $|f(x)-f(y)| < \epsilon$ for $x,y \in \R$ with $|x-y| < \delta$. By the mean value theorem we may choose $R > 0$ sufficiently large such that $|g(z) - g(w)| < \delta$ for all $z,w \in \C^n$ with $|z| > R$ and $|z-w| < 1$. This implies $|(f \circ g)(z) - (f \circ g)(w)| < \epsilon$ and hence $\textup{Osc}_z(f \circ g) \to 0$ as $|z| \to \infty$. We conclude $f \circ g \in \textup{VO}(\C^n)$.

For a concrete example consider $z \mapsto \exp(i\sqrt{|z|})$.
\end{example}

With these preparations we can now prove the following proposition.

\begin{proposition} \label{compact_semi_commutators}
Let $f \in \textup{BUC}(\C^n)$ and $t > 0$. Then the following are equivalent:
\begin{itemize}
	\item[(i)] $f \in \textup{VO}(\C^n)$,
	\item[(ii)] $H_f^{(t)}$ and $H_{\bar{f}}^{(t)}$ are compact,
	\item[(iii)] $T_f^{(t)}T_g^{(t)} - T_{fg}^{(t)}$ and $T_g^{(t)}T_f^{(t)} - T_{gf}^{(t)}$ are compact for all $g \in L^{\infty}(\C^n)$.
\end{itemize}
\end{proposition}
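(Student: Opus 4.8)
The plan is to prove the three equivalences in the cycle $(i) \Rightarrow (ii) \Rightarrow (iii) \Rightarrow (i)$, using the fixed $t > 0$ throughout (compactness is independent of the weight, but we do not even need that). For $(i) \Rightarrow (ii)$, I would use the standard localization technique for Hankel operators on Fock space. The key observation is that for $f \in \textup{VO}(\C^n)$, the local oscillation controls the Hankel operator: one has $\|H_f^{(t)} k_w^t\|_t^2 \le \textup{MO}^t(f)(w)$ (or a closely related quantity), and since $\textup{Osc}_z(f) \to 0$ as $|z| \to \infty$ together with boundedness of $f$ forces $\textup{MO}^t(f)(w) \to 0$ as $|w| \to \infty$ (split the integral defining $\textup{MO}^t$ over $|z| < \delta$ and $|z| \ge \delta$ as in the proof of Proposition \ref{Lemma_limit_BMO_lambda_UC}, using that for $|w|$ large the ball of radius $\delta$ around $w$ lies in the region of small oscillation). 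A function whose ``Berezin-type'' data vanishes at infinity yields a compact Hankel operator; concretely, write $f = f\chi_{B_R} + f\chi_{\C^n \setminus B_R}$, note $H_{f\chi_{B_R}}^{(t)}$ is Hilbert--Schmidt (bounded symbol of compact support), and the remaining piece has small norm by the decay of $\textup{MO}^t$. Since $\bar f \in \textup{VO}(\C^n)$ whenever $f$ is, the same applies to $H_{\bar f}^{(t)}$.

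For $(ii) \Rightarrow (iii)$, I would simply invoke the algebraic identity already used repeatedly in the paper: $T_f^{(t)}T_g^{(t)} - T_{fg}^{(t)} = -(H_{\bar f}^{(t)})^* H_g^{(t)}$ and $T_g^{(t)}T_f^{(t)} - T_{gf}^{(t)} = -(H_{\bar g}^{(t)})^* H_f^{(t)}$. For $g \in L^{\infty}(\C^n)$ the Hankel operators $H_g^{(t)}$ and $H_{\bar g}^{(t)}$ are bounded, so the products of a compact operator with a bounded operator are compact. This step is routine.

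The direction $(iii) \Rightarrow (i)$ is the one I expect to be the main obstacle, since it requires extracting geometric information about $f$ from an operator-theoretic compactness hypothesis. The natural route: specializing $(iii)$ to suitable test symbols $g$ and testing against the normalized reproducing kernels $k_w^t$. Taking $g = \bar f$ gives that $(H_f^{(t)})^* H_f^{(t)}$ is compact, hence $H_f^{(t)}$ is compact, hence $\|H_f^{(t)} k_w^t\|_t \to 0$ as $|w| \to \infty$ (normalized kernels go weakly to $0$). Then I would relate $\|H_f^{(t)} k_w^t\|_t$ back to a mean-oscillation quantity at $w$ — using $\|H_f^{(t)} k_w^t\|_t^2 = (|f|^2)^{\sim(t)}(w) - \|T_f^{(t)}k_w^t\|_t^2 \ge (|f|^2)^{\sim(t)}(w) - |\tilde f^{(t)}(w)|^2 = \textup{MO}^t(f)(w)$ — so that $\textup{MO}^t(f)(w) \to 0$ as $|w| \to \infty$. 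Finally I must translate decay of $\textup{MO}^t(f)$ at infinity into decay of $\textup{Osc}_w(f)$: for $f \in \textup{BUC}(\C^n)$ this is a local-to-pointwise comparison — since $f$ is continuous, $|f(w) - f(w')|$ for $|w - w'| < 1$ can be bounded by the oscillation of $f$ over a ball, which in turn is controlled by $\textup{MO}^t(f)$ over a slightly larger ball uniformly once we use uniform continuity; the cleanest implementation compares $\textup{Osc}_w(f)$ with $\sup_{|w'-w|\le C} \textup{MO}^t(f)(w')^{1/2}$ via a covering argument and the bound that for a continuous function the essential oscillation and the true oscillation agree. The care needed here — making the comparison constants independent of $w$ and handling the passage from $L^2$-averaged oscillation to sup-oscillation — is where the real work lies; everything else reduces to the identities and compactness facts already assembled above.
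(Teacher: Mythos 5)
Your cycle $(i)\Rightarrow(ii)\Rightarrow(iii)\Rightarrow(i)$ matches the paper's structure, and your $(ii)\Leftrightarrow(iii)$ step is exactly the paper's ("standard", via $T_f^{(t)}T_g^{(t)}-T_{fg}^{(t)}=-(H_{\bar f}^{(t)})^*H_g^{(t)}$). But where the paper disposes of $(i)\Leftrightarrow(ii)$ by citing \cite[Theorems 3.1 and 5.3]{B} and \cite[Theorem 2.3]{BC0}, you argue from scratch, and two of your key steps fail as written. First, in $(i)\Rightarrow(ii)$ the decomposition $f=f\chi_{B_R}+f\chi_{\C^n\setminus B_R}$ does not yield a small remainder: the sharp cutoff creates a jump of size $O(\|f\|_{\infty})$ along $\partial B_R$, so for $w$ near the sphere $|w|=R$ one has $\textup{MO}^t(f\chi_{\C^n\setminus B_R})(w)$ bounded \emph{below} by a constant independent of $R$ (already for $f\equiv 1$), and hence $\|H^{(t)}_{f\chi_{\C^n\setminus B_R}}\|_t$ does not tend to $0$ as $R\to\infty$. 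What does follow from $f\in\textup{VO}(\C^n)$ is $\textup{MO}^t(f)(w)\to 0$ as $|w|\to\infty$ (though for fixed $t$ the tail of the Gaussian must be made small by taking the splitting radius large, not by $t\to 0$ as in Proposition \ref{Lemma_limit_BMO_lambda_UC}); but this only controls $\|H_f^{(t)}k_w^t\|_t$, and passing from decay of this Berezin-type data to compactness of $H_f^{(t)}$ is precisely the nontrivial content of \cite[Theorem 5.3]{B} --- it is not a one-line truncation.

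Second, in $(iii)\Rightarrow(i)$ your inequality is reversed: since $\|T_f^{(t)}k_w^t\|_t^2\geq|\langle T_f^{(t)}k_w^t,k_w^t\rangle_t|^2=|\tilde f^{(t)}(w)|^2$, one gets $\|H_f^{(t)}k_w^t\|_t^2=(|f|^2)^{\sim(t)}(w)-\|T_f^{(t)}k_w^t\|_t^2\leq\textup{MO}^t(f)(w)$, not $\geq$. The correct substitute is the estimate (\ref{Abschaetzung_BMO_by_Hankel_operators}) from Lemma \ref{Lemma_section_8_BMO_versus_Hankel_norm}, namely $\textup{MO}^t(f)(w)\leq\|H_f^{(t)}k_w^t\|_t^2+\|H_{\bar f}^{(t)}k_w^t\|_t^2$, which requires \emph{both} Hankel operators; this is available since taking $g=\bar f$ in $(iii)$ makes both $(H_f^{(t)})^*H_f^{(t)}$ and $(H_{\bar f}^{(t)})^*H_{\bar f}^{(t)}$ compact. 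With that repair your final passage from $\textup{MO}^t(f)(w)\to0$ to $\textup{Osc}_w(f)\to0$ for uniformly continuous $f$ is a legitimate (and more self-contained) alternative to the paper's route, which instead writes $f=\tilde f^{(t)}+(f-\tilde f^{(t)})$ with $\tilde f^{(t)}\in\textup{VO}(\C^n)$ by \cite[Theorem 5.3]{B} and $f-\tilde f^{(t)}\in C_0(\C^n)\subset\textup{VO}(\C^n)$ by \cite[Theorem 2.3]{BC0}. As it stands, however, both flagged steps are genuine gaps.
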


\begin{proof}
Let $f \in \textup{VO}(\C^n)$. Then $H_f^{(t)}$ and $H_{\bar{f}}^{(t)}$ are compact by a straight forward extension of \cite[Theorem 5.3]{B} from the case $t = \frac{1}{4}$ to general $t > 0$.

Now assume that $H_f^{(t)}$ and $H_{\bar{f}}^{(t)}$ are both compact. Then $\tilde{f}^{(t)} \in \textup{VO}(\C^n)$ by \cite[Theorem 5.3]{B} again and $T_{f-\tilde{f}^{(t)}}^{(t)}$ is compact by \cite[Theorem 3.1]{B}. Moreover, $f-\tilde{f}^{(t)} \in \textup{BUC}(\C^n)$ and thus $f-\tilde{f}^{(t)} \in C_0(\C^n) \subset \textup{VO}(\C^n)$ by \cite[Theorem 2.3]{BC0}. As the sum of two functions in $\textup{VO}(\C^n)$ is obviously again in $\textup{VO}(\C^n)$, we conclude that $f$ is in $\textup{VO}(\C^n)$.
\vspace{1ex}\par 
That (ii) and (iii) are equivalent is standard.
\end{proof}
\section{Symbols in VMO}
\label{VMO_symbols}
\setcounter{equation}{0}
In the present section we consider the quantization problem for operators with symbols of vanishing mean oscillation. We start by recalling some notation. Consider locally 
integrable functions $f: \mathbb{R}^{n} \rightarrow \mathbb{C}$ with average value
\begin{equation*}
f_{E} = {1\over |E|} \int_{E} f
\end{equation*}
on a bounded measurable subset $E\subset \mathbb{R}^n$ with finite measure $|E|$.  We consider the variance of $f$ on $E$
\begin{equation*}
\text{Var}_{E} (f) = {1\over |E|} \int_{E} |f - f_{E}|^{2}
\end{equation*}
as well as the corresponding quantity
\begin{equation*}
\textup{Osc}_{E} (f) = {1\over |E|} \int_{E} |f - f_{E} |.
\end{equation*}

\begin{definition} (see \cite{S,ST}) We say $f$ is in $\textup{BMO}(\mathbb{R}^{n})$ if the set  
$$\big{\{}\textup{Osc}_{E}(f): E \text{ each n-cube in } \mathbb{R}^{n}\big{\}}$$ 
is bounded. We say $f$ is in $\textup{VMO}(\mathbb{R}^{n})$ if $f$ is in $\textup{BMO}(\mathbb{R}^{n})$ and, for all $n$-cubes $E$,
\begin{equation*}
\lim_{a \rightarrow 0}  \sup\big{\{} \textup{Osc}_{E}(f): |E|\leq a\big{\}} = 0.
\end{equation*}
If we replace $\textup{Osc}_{E}(f)$ by $\textup{Var}_{E}(f)$, we get new sets $\textup{BMO}_{2}(\mathbb{R}^n)$ and $\textup{VMO}_{2}(\mathbb{R}^n)$.  
\end{definition}

\begin{remark}
$\textup{BMO}(\R^{2n})$ and $\textup{BMO}_{\ast}(\C^n)$ defined earlier are quite different.  In particular, $\textup{UC}(\R^{2n}) \cong \textup{UC}(\C^n)$ is not contained in $\textup{BMO}(\R^{2n})$.
\end{remark}

The Cauchy-Schwarz inequality shows that 
\begin{equation*}
\textup{BMO}_{2}(\mathbb{R}^n) \subset \textup{BMO}(\mathbb{R}^n), \hskip.2in \textup{VMO}_{2}(\mathbb{R}^n)\subset \textup{VMO}(\mathbb{R}^n).
\end{equation*}
\noindent
Direct calculation shows (eg. \cite[p. 313]{BBCZ})

\begin{lemma}
\label{lemma_representation_ Var}
We have, for arbitrary bounded measurable subsets $E$,
\begin{equation*}
\textup{Var}_{E}(f) = {1 \over{2|E|^{2}}} \int_{E} \int_{E}|f(z) - f(w)|^{2} dv(z) dv(w)
\end{equation*}
so that, for $F \subset E$,
\begin{equation*}
\textup{Var}_{E}(f) \geq {|F|^{2} \over |E|^{2}} \textup{Var}_{F}(f).
\end{equation*}
\end{lemma}

\begin{remark}\label{remark_cubes_ball}
{\rm Because of Lemma \ref{lemma_representation_ Var}, $\textup{BMO}_{2}(\mathbb{R}^n)$ and $\textup{VMO}_{2}(\mathbb{R}^n)$ have the additional property 
that $n$-cubes can be replaced by $n$-balls in their definitions. We need only consider the inscribed and circumscribed balls for a given cube. }
\end{remark}

\vspace{1ex} \par 
In the analysis that follows, we identify $\mathbb{C}^{n}$ with $\mathbb{R}^{2n}$ in the standard way and we consider the set $\text {VMO}(\mathbb{C}^{n}) \cap L^{\infty}(\mathbb{C}^{n})$, 
the essentially bounded functions in $\text{VMO}(\mathbb{C}^{n})$. It is easy to see that $\textup{BUC}(\mathbb{C}^n) \subset \textup{VMO}_2(\mathbb{C}^n)$. 
However, there are discontinuous functions in $\textup{VMO}(\mathbb{C}^n) \cap L^{\infty}(\mathbb{C}^{n})$, \cite[p.290]{ST}. We have: 
\begin{theorem}\label{theorem_VMO_algebra}
$\textup{VMO}(\mathbb{C}^n) \cap L^{\infty}(\mathbb{C}^{n})$ is a sup norm-closed, conjugate closed subalgebra of $L^{\infty}(\mathbb{C}^{n})$ with 
\begin{equation}\label{Equality_of_VMO_spaces_intersection}
\textup{VMO}(\mathbb{C}^n) \cap L^{\infty}(\mathbb{C}^{n}) = \textup{VMO}_2(\mathbb{C}^n) \cap L^{\infty}(\mathbb{C}^{n}). 
\end{equation}
\end{theorem}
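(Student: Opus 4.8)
The plan is to establish the two claims in turn: first the algebra structure of $\textup{VMO}\cap L^\infty$, and second the identity \eqref{Equality_of_VMO_spaces_intersection}. For the algebra structure, I would argue as follows. Sup-norm closedness and conjugate-closedness are the easy parts: $\textup{Osc}_E$ is subadditive and sends $\overline{f}$ to $\textup{Osc}_E(f)$, and if $f_k \to f$ uniformly then $\textup{Osc}_E(f_k) \to \textup{Osc}_E(f)$ uniformly in $E$, so both the $\textup{BMO}$ bound and the vanishing-as-$|E|\to 0$ condition pass to the limit; closedness under scalar multiples and sums is immediate from subadditivity. The only real point is closure under products. For $f,g \in \textup{VMO}\cap L^\infty$ and a cube $E$, I would write $fg - (fg)_E$ and bound $\textup{Osc}_E(fg)$ by controlling $(fg)(z) - (fg)(w)$ via $f(z)(g(z)-g(w)) + g(w)(f(z)-f(w))$, using the representation in Lemma \ref{lemma_representation_ Var} to get $\textup{Var}_E(fg) \le \|f\|_\infty^2\,\textup{Var}_E(g) + \|g\|_\infty^2\,\textup{Var}_E(f)$ (after a Cauchy–Schwarz on the cross term and the inequality $(a+b)^2 \le 2a^2+2b^2$, picking up a harmless factor). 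Since $\textup{Var}_E$ dominates $\textup{Osc}_E^2$, this simultaneously gives the $\textup{BMO}$ bound and the vanishing mean oscillation for $fg$; it also shows $fg \in \textup{VMO}_2\cap L^\infty$ directly, which already contains half of the second assertion.

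For the identity \eqref{Equality_of_VMO_spaces_intersection}, the inclusion $\textup{VMO}_2 \cap L^\infty \subseteq \textup{VMO}\cap L^\infty$ is the trivial one (Cauchy–Schwarz: $\textup{Osc}_E(f)^2 \le \textup{Var}_E(f)$, already noted in the text). The substantive direction is $\textup{VMO}\cap L^\infty \subseteq \textup{VMO}_2 \cap L^\infty$. Here the boundedness of $f$ is essential: for a bounded $f$ one has the pointwise estimate $|f(z)-f_E| \le 2\|f\|_\infty$, hence
\[
\textup{Var}_E(f) = \frac{1}{|E|}\int_E |f - f_E|^2 \le \frac{2\|f\|_\infty}{|E|}\int_E |f - f_E| = 2\|f\|_\infty\,\textup{Osc}_E(f).
\]
This inequality immediately transfers both the $\textup{BMO}$ bound ($f \in \textup{BMO}$, $\|f\|_\infty < \infty$ $\Rightarrow$ $f \in \textup{BMO}_2$) and, crucially, the vanishing condition: $\sup\{\textup{Var}_E(f) : |E|\le a\} \le 2\|f\|_\infty \sup\{\textup{Osc}_E(f) : |E|\le a\} \to 0$ as $a \to 0$. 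So $f \in \textup{VMO}_2$, and since $f$ is already bounded we are done.

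The main obstacle, such as it is, is the product step in the algebra claim — keeping track of the constants so that the $\textup{VMO}$ (vanishing) condition genuinely survives, not just the $\textup{BMO}$ bound. Using Lemma \ref{lemma_representation_ Var} to pass to the symmetric double-integral form of $\textup{Var}_E$ is what makes this clean, since it avoids having to compare $(fg)_E$ with $f_E g_E$ directly; one just estimates $|(fg)(z)-(fg)(w)|$ pointwise and integrates. Everything else is routine subadditivity and the elementary bounded-function inequality above; no deep input beyond Lemma \ref{lemma_representation_ Var} and the Cauchy–Schwarz inequality is needed.
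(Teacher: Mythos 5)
Your proposal is correct and follows essentially the same route as the paper: the product closure via the pointwise splitting $f(z)g(z)-f(w)g(w)=f(z)(g(z)-g(w))+g(w)(f(z)-f(w))$ integrated over $E\times E$ using Lemma \ref{lemma_representation_ Var}, and the identity \eqref{Equality_of_VMO_spaces_intersection} via the elementary bound $\textup{Var}_E(f)\leq 2\|f\|_{\infty}\,\textup{Osc}_E(f)$ together with Cauchy--Schwarz for the reverse inclusion. Your write-up merely spells out the "standard" closure arguments that the paper leaves implicit.
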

\begin{proof} A easy estimate using $|f-f_E| \leq 2 \| f\|_{\infty}$ implies (\ref{Equality_of_VMO_spaces_intersection}). 
Integrating the inequality
\begin{equation*}
|f(z) g(z) - f(w) g(w)|^{2}\leq 2 \|f\|_{\infty}^{2} |g(z) -g(w)|^{2} + 2\|g\|_{\infty}^{2} |f(z) - f(w)|^{2}
\end{equation*}
over $E \times E$ shows that for $f,g$ in $\textup{VMO}_{2}(\mathbb{C}^n) \cap L^{\infty}(\mathbb{C}^{n})$, we also have $fg$ in $\textup{VMO}_{2}(\mathbb{C}^n) \cap L^{\infty}(\mathbb{C}^{n})$. 
The remainder of the proof is standard.
\end{proof}
\begin{remark}
{\rm 
Theorem \ref{theorem_VMO_algebra} gives, for  $\mathbb{C}^{n}$ (or $\mathbb{R}^{n}$), a classical result of Sarason \cite{S} for the circle.}
\end{remark}
Now we consider the quantization problem for Toeplitz operators with bounded symbols in $\textup{VMO}(\mathbb{C}^n)$. In the definitions above and for convenience we pass from $n$-cubes 
$E \subset \mathbb{C}^n$ to Euclidean $n$-balls 
\begin{equation*}
B(x, \rho):= \{ z \in \mathbb{C}^n \: : \: |z-x| < \rho\}
\end{equation*}
centered at $x \in \mathbb{C}^n$ and with radius $\rho>0$, cf. Remark \ref{remark_cubes_ball}. 
\begin{lemma}\label{estimate_hilf_mean_oscillation}
Let $f\in L^{\infty}(\mathbb{C}^n)$ and $x \in \mathbb{C}^n$. Then, for all $\rho >0$ and all $t>0$ we have 
\begin{equation*}
\textup{MO}^t(f)(x) \leq \int _{\mathbb{C}^n} \big{|}f(x-z)-f_{B(x,\rho)}\big{|}^2 d\mu_t(z).
\end{equation*}
\end{lemma}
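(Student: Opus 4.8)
The key observation is that $\textup{MO}^t(f)(x)$ is, by the variance-type formula in \eqref{Property_and_defn_of_MO}, a minimization over constants: for any constant $c \in \C$ one has
\[
\textup{MO}^t(f)(x) = \int_{\C^n} |f(x-z) - \tilde{f}^{(t)}(x)|^2 \, d\mu_t(z) \leq \int_{\C^n} |f(x-z) - c|^2 \, d\mu_t(z),
\]
since $\tilde{f}^{(t)}(x) = \int_{\C^n} f(x-z)\,d\mu_t(z)$ is precisely the $L^2(\mu_t)$-orthogonal projection of the function $z \mapsto f(x-z)$ onto the constants, hence the value of $c$ that minimizes the right-hand integral. (Equivalently, expand $|f(x-z) - c|^2 = |f(x-z) - \tilde{f}^{(t)}(x)|^2 + |\tilde{f}^{(t)}(x) - c|^2 + 2\re\big((f(x-z)-\tilde{f}^{(t)}(x))\overline{(\tilde{f}^{(t)}(x)-c)}\big)$; the cross term integrates to $0$ against $d\mu_t$, and the middle term is $\geq 0$.)

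The plan is then simply to choose $c := f_{B(x,\rho)}$, the average of $f$ over the ball $B(x,\rho)$. This is a well-defined finite constant because $f \in L^\infty(\C^n)$ and $B(x,\rho)$ has finite positive measure. Substituting this particular $c$ into the inequality above yields exactly
\[
\textup{MO}^t(f)(x) \leq \int_{\C^n} \big|f(x-z) - f_{B(x,\rho)}\big|^2 \, d\mu_t(z),
\]
which is the claimed estimate.

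I expect no real obstacle here: the only point requiring a line of care is the verification that $\tilde{f}^{(t)}(x)$ is the unique $L^2(\mu_t)$-best constant approximation to $z \mapsto f(x-z)$, which is the standard Hilbert-space fact that the conditional expectation (here, the integral against the probability measure $\mu_t$) minimizes the squared $L^2$-distance to constants. Boundedness of $f$ guarantees $z \mapsto f(x-z) \in L^2(\mu_t)$ so all integrals converge, and the inequality holds for every $\rho > 0$ and every $t > 0$ as stated.
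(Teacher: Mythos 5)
Your proof is correct and is essentially identical to the paper's: both observe that $\tilde{f}^{(t)}(x)$ minimizes $c \mapsto \int_{\C^n}|f(x-z)-c|^2\,d\mu_t(z)$ over constants and then substitute $c = f_{B(x,\rho)}$. Your explicit verification of the minimizing property via the vanishing cross term is a fine elaboration of what the paper calls ``standard arguments.''
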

\begin{proof}
Consider the function $L:\mathbb{C} \rightarrow [0, \infty)$ defined by 
\begin{equation*}
L(c)= \int_{\mathbb{C}^n} \big{|}f(x-z) -c\big{|}^2 d\mu_t(z). 
\end{equation*}
Standard arguments show that $L$ attains a global minimum at $c= \tilde{f}^{(t)}(x)$. Hence the lemma follows from  $\textup{MO}^t(f)(x)= L \circ \tilde{f}^{(t)}(x)$. 
\end{proof}
\begin{theorem}
\label{Theorem_VMO_vanishing_BMO}
Let $f \in \textup{VMO}(\mathbb{C}^n)\cap L^{\infty}(\mathbb{C}^{n})$, then $\lim_{t \rightarrow 0} \| f\|_{\textup{BMO}_{\ast}^t}=0$.
\end{theorem}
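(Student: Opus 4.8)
The plan is to estimate $\textup{MO}^t(f)(x)$ uniformly in $x\in\C^n$ by splitting a suitable integral into a ``bulk'' part supported near the origin and a Gaussian tail, the decisive point being to choose the cut-off radius proportional to $\sqrt t$, so that the normalization $(4\pi t)^{-n}$ of $\mu_t$ is exactly absorbed by the volume of a ball of radius $\propto\sqrt t$.

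By Theorem \ref{theorem_VMO_algebra} we have $f\in\textup{VMO}_2(\C^n)\cap L^{\infty}(\C^n)$, and by Remark \ref{remark_cubes_ball} the vanishing condition defining $\textup{VMO}_2$ may be expressed with Euclidean balls in place of cubes; hence
\[\omega(\rho):=\sup\bigl\{\textup{Var}_{B(x,\rho')}(f):x\in\C^n,\ 0<\rho'\le\rho\bigr\}\]
is finite (since $f\in L^{\infty}$), nondecreasing in $\rho$, and $\omega(\rho)\to0$ as $\rho\to0^{+}$. Fix $\epsilon>0$ and a parameter $R>0$ to be chosen. Applying Lemma \ref{estimate_hilf_mean_oscillation} with $\rho=R\sqrt t$ and splitting the resulting integral at $|z|=R\sqrt t$ gives
\[\textup{MO}^t(f)(x)\le\int_{|z|<R\sqrt t}\bigl|f(x-z)-f_{B(x,R\sqrt t)}\bigr|^2\,d\mu_t(z)+\int_{|z|\ge R\sqrt t}\bigl|f(x-z)-f_{B(x,R\sqrt t)}\bigr|^2\,d\mu_t(z).\]
In the tail integral I bound the integrand by $4\|f\|_{\infty}^2$ and use the substitution $z=\sqrt t\,y$ to compute $\mu_t(\{|z|\ge R\sqrt t\})=(4\pi)^{-n}\int_{|y|\ge R}e^{-|y|^2/4}\,dv(y)$, which is independent of $t$ and of $x$ and tends to $0$ as $R\to\infty$; I fix $R$ so that the tail contributes at most $\epsilon$, uniformly in $t>0$ and $x$. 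In the bulk integral I estimate $e^{-|z|^2/(4t)}\le1$, change variables to $u=x-z$, and recognize the integral as $|B(x,R\sqrt t)|\,\textup{Var}_{B(x,R\sqrt t)}(f)$; since $|B(x,R\sqrt t)|=|B(0,1)|\,R^{2n}t^n$, the powers of $t$ cancel against $(4\pi t)^{-n}$ and the bulk is bounded by $|B(0,1)|R^{2n}(4\pi)^{-n}\,\omega(R\sqrt t)$. With $R$ now fixed, this tends to $0$ as $t\to0$, so there is $t_0>0$ with bulk contribution $<\epsilon$ whenever $0<t<t_0$, uniformly in $x$. Combining, $\sup_{x}\textup{MO}^t(f)(x)<2\epsilon$ for $t<t_0$, i.e.\ $\|f\|_{\textup{BMO}_{\ast}^t}<\sqrt{2\epsilon}$; since $\epsilon$ was arbitrary, the assertion follows.

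The only genuine subtlety is the choice of the scale $\rho=R\sqrt t$. With a fixed $\rho$ the bulk term would carry the factor $|B(0,\rho)|/(4\pi t)^n$, which blows up as $t\to0$; tying $\rho$ to $\sqrt t$ makes both pieces scale-invariant in the Gaussian normalization, so that the tail mass and the bulk prefactor become pure functions of $R$, while the remaining $\textup{Var}_{B(x,R\sqrt t)}(f)$ is controlled by the $\textup{VMO}_2$ modulus $\omega(R\sqrt t)\to0$. Equivalently, by the identity \eqref{identity_BMO_new} one may first replace $f$ by $f(2\sqrt t\,\cdot)$ and work at the fixed weight $t=\tfrac14$, where the Gaussian density is bounded by $1$ and the choice $\rho=R\sqrt t$ is just the pullback of a fixed radius $R$; the uniformity in $x$ is automatic because $\omega$ is defined as a supremum over all centers.
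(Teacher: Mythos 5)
Your proposal is correct and follows essentially the same route as the paper: the same Lemma \ref{estimate_hilf_mean_oscillation}, the same decomposition at radius proportional to $\sqrt{t}$, and the identical Gaussian tail bound, with the vanishing of $\textup{Var}_{B(x,R\sqrt{t})}(f)$ uniformly in $x$ supplied by the $\textup{VMO}_2$ characterization over balls. The only deviation is in the bulk term, where you bound the Gaussian density by $1$ and absorb $(4\pi t)^{-n}$ into the ball volume directly, rather than the paper's Cauchy--Schwarz step producing $\sqrt{\textup{Var}_{B(x,\alpha\sqrt{t})}(f)}$; this is a mild simplification that yields the same conclusion.
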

\begin{proof}
Let $t>0$ and fix a parameter $\alpha >0$ which we will specify later on. According to Lemma \ref{estimate_hilf_mean_oscillation} we can estimate the mean oscillation of $f$ as follows: 
\begin{equation*}
\textup{MO}^t(f)(x)
\leq \Big{\{} \int_{ |z| \leq \alpha \sqrt{t}} + \int_{|z| > \alpha \sqrt{t}} \Big{\}} \big{|}f(x-z)-f_{B(x, \alpha \sqrt{t})} \big{|}^2 d\mu_t(z). 
\end{equation*}
We denote the first and the second integral by $I_{1,t,\alpha}(x)$ and $I_{2,t,\alpha}(x)$, respectively, and estimate them separately. By the Cauchy-Schwarz inequality we have
\begin{multline*}
I_{1,t,\alpha}(x)= \frac{1}{(4\pi t)^n}\int_{|x-z| \leq \alpha \sqrt{t}} \big{|} f(z)- f_{B(x, \alpha \sqrt{t})} \big{|}^2 e^{-\frac{|x-z|^2}{4t}} dv(z)\\
 \leq \frac{1}{(4 \pi t)^n} \left\{ \int_{B(x, \alpha \sqrt{t})} e^{- \frac{|x-z|^2}{2t}} dv(z) \right\}^{\frac{1}{2}} \left\{ \int_{B(x, \alpha \sqrt{t})} |f(z)-f_{B(x, \alpha \sqrt{t})}|^4 dv(z) \right\}^{\frac{1}{2}}. 
\end{multline*}
The first integral takes the form 
\begin{equation*}
 \int_{B(x, \alpha \sqrt{t})} e^{- \frac{|x-z|^2}{2t}} dv(z)= (2t)^n C_{\alpha} \hspace{3ex} \mbox{\it where } \hspace{3ex} C_{\alpha}= \int_{B(0, \alpha)} e^{-|z|^2} dv(z). 
\end{equation*}
Therefore 
\begin{equation*}
I_{1,t, \alpha}(x) \leq 2\|f\|_{\infty}  \frac{\sqrt{C_{\alpha}}}{(8\pi^2t)^{\frac{n}{2}}} \sqrt{|B(x, \alpha \sqrt{t})|}  \cdot \sqrt{\textup{Var}_{B(x, \alpha\sqrt{t})}(f)}. 
\end{equation*}
Since $t^{-\frac{n}{2}} \sqrt{|B(x, \alpha \sqrt{t})|} = O(1)$ as $t \rightarrow 0$ and $\lim_{t \rightarrow 0}\sup \{\textup{Var}_{B(x, \alpha\sqrt{t})}(f)\: : \: x \in \mathbb{C}^n\}=0$ it follows that 
\begin{equation}\label{limit_first_integral}
\lim_{t \rightarrow 0} I_{1,t, \alpha}(x) =0 
\end{equation}
uniformly for $x \in \mathbb{C}^n$. Next we estimate the second integral $I_{2,t,\alpha}(x)$. We obtain
\begin{equation}\label{limit_second_integral}
I_{2,t,\alpha}(x)\leq 4 \|f\|_{\infty}^2 \int_{|z|> \alpha \sqrt{t}} d\mu_t(z)= \frac{4 \|f\|^2_{\infty}}{(4\pi)^n}\int_{|z|> \alpha} e^{-\frac{|z|^2}{4}} dv(z). 
\end{equation}
\par 
Let $\varepsilon >0$ and with (\ref{limit_second_integral}) choose $\alpha >0$ sufficiently large such that $0 \leq I_{2,t,\alpha}(x)<\varepsilon$. With this fixed $\alpha >0$ and 
(\ref{limit_first_integral}) we can choose $t>0$ sufficiently small such that $I_{1,t,\alpha}(x)< \varepsilon$ for all $x \in \mathbb{C}^n$. Then 
\begin{equation*}
\|\textup{MO}^t(f)\|_{\infty} <2 \varepsilon
\end{equation*}
and the assertion follows. 
\end{proof}
\begin{theorem}
\label{Theorem_Quantization_VMO_Fock}
Let $f \in \textup{VMO}(\mathbb{C}^n)\cap L^{\infty}(\mathbb{C}^{n})$, then $\lim_{t \rightarrow 0} \| H_f^{(t)}\|_t=0$ and, in particular, we have 
\begin{equation*}
\lim_{t \rightarrow 0} \|T_f^{(t)} T_g^{(t)}-T_{fg}^{(t)}\|_t=0
\end{equation*}
for all $g \in L^{\infty}(\mathbb{C}^n)$ and all $g \in \textup{UC}(\mathbb{C}^n)$. 
\end{theorem}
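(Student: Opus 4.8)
The plan is to combine the uniform norm estimate for Hankel operators from Section~\ref{Section_Notation} with the vanishing--$\textup{BMO}_*$ property just established in Theorem~\ref{Theorem_VMO_vanishing_BMO}. First I would observe that since $f \in \textup{VMO}(\mathbb{C}^n) \cap L^{\infty}(\mathbb{C}^n) \subset L^{\infty}(\mathbb{C}^n) \subset \textup{BMO}_*(\mathbb{C}^n)$, Corollary~\ref{main_theorem_section_2} applies and gives, with a constant $C>0$ independent of $t$,
\[
\|H_f^{(t)}\|_t \leq C \, \|f\|_{\textup{BMO}_*^t}.
\]
By Theorem~\ref{Theorem_VMO_vanishing_BMO} the right-hand side tends to $0$ as $t \to 0$, which proves the first assertion $\lim_{t \to 0} \|H_f^{(t)}\|_t = 0$.

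For the statement on semi-commutators, I would invoke the standard algebraic identity recalled in the introduction,
\[
T_f^{(t)} T_g^{(t)} - T_{fg}^{(t)} = -\,(H_{\bar{f}}^{(t)})^{*} H_g^{(t)},
\]
so that $\|T_f^{(t)} T_g^{(t)} - T_{fg}^{(t)}\|_t \leq \|H_{\bar{f}}^{(t)}\|_t \, \|H_g^{(t)}\|_t$. Since $\textup{VMO}(\mathbb{C}^n) \cap L^{\infty}(\mathbb{C}^n)$ is conjugate closed (Theorem~\ref{theorem_VMO_algebra}), $\bar{f}$ also lies in this class, and hence $\|H_{\bar{f}}^{(t)}\|_t \to 0$ by the first part. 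It remains to control $\|H_g^{(t)}\|_t$: if $g \in L^{\infty}(\mathbb{C}^n)$ then $g \in \textup{BMO}_*(\mathbb{C}^n)$ and Corollary~\ref{main_theorem_section_2} bounds $\|H_g^{(t)}\|_t$ uniformly in $t$; if $g \in \textup{UC}(\mathbb{C}^n)$ then $g \in \textup{BMO}_*(\mathbb{C}^n)$ by Proposition~\ref{Lipschitz_continuity_heat_transform} and the same corollary again gives a $t$-uniform bound (indeed, $\|H_g^{(t)}\|_t \to 0$ by Theorem~\ref{Theorem_behaviour_semi_commutator_uniformly_bounded_function}, but boundedness suffices). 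In either case the product $\|H_{\bar{f}}^{(t)}\|_t \, \|H_g^{(t)}\|_t \to 0$, which yields the claim.

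The only genuine input here is Theorem~\ref{Theorem_VMO_vanishing_BMO}, which was proved just above; everything else in this proof is an assembly of already-available estimates and the conjugate-closedness of the symbol algebra. Consequently there is no real obstacle left at this stage — the proof is short. (I note in passing that, as emphasized in the introduction, the argument is genuinely asymmetric: one only needs $f$ in the good class and a uniform bound $\limsup_{t \to 0}\|H_g^{(t)}\|_t < \infty$ on the other factor, and the case $g \in L^{\infty}(\mathbb{C}^n)$ is precisely where this flexibility is used.)
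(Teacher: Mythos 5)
Your proof is correct and follows essentially the same route as the paper's (extremely terse) proof: apply Theorem \ref{Theorem_VMO_vanishing_BMO} to conclude $\|f\|_{\textup{BMO}_*^t}\to 0$, feed this into the $t$-uniform Hankel estimate of Corollary \ref{main_theorem_section_2} to get $\|H_f^{(t)}\|_t\to 0$, and then use the identity $T_f^{(t)}T_g^{(t)}-T_{fg}^{(t)}=-(H_{\bar f}^{(t)})^*H_g^{(t)}$. The only detail you supply beyond what the paper writes down is the handling of $\bar f$; note that this can be done even more directly than by invoking Theorem \ref{theorem_VMO_algebra}, since $\textup{MO}^t(\bar f)=\textup{MO}^t(f)$ pointwise and hence $\|\bar f\|_{\textup{BMO}_*^t}=\|f\|_{\textup{BMO}_*^t}$, so the first part already gives $\|H_{\bar f}^{(t)}\|_t\to 0$ without appealing to conjugate-closedness of the symbol algebra.
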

\begin{proof}
It is sufficient to show that $\lim_{t \rightarrow 0} \| H_f^{(t)}\|_t=0$. In fact, this follows from Theorem \ref{Theorem_VMO_vanishing_BMO} and 
Corollary \ref{main_theorem_section_2}.
\end{proof}
\section{Fock quantization algebras}
\label{Section_8}
\noindent
We now consider the direct integral $L:= \int^{\oplus}_{\mathbb{R}_+} L_{t}^2$ of $L^2_t$-spaces. In the following we write decomposable operators $X$ on $L$ in the form 
$$ X = \oplus_{t > 0} X^{(t)} \hspace{3ex} \mbox{\it  for} \hspace{3ex} X^{(t)} \in \mathcal{L}(L^2_{t})\hspace{3ex} \mbox{\it with} \hspace{3ex} \|X\|:= \sup_{t >0} \| X^{(t)} \| < \infty.$$ 
The algebra of such operators is denoted by $\textup{Op}(L)$.  The set  
$$\mathcal{I} := \Big{\{} X \in \textup{Op}(L): \lim_{t \rightarrow 0} \|X^{(t)}\|_{t} = 0\Big{\}}$$ 
is a closed two-sided ideal in $\textup{Op}(L)$. We also consider the direct integral of the Fock spaces
$$H:= \int^{\oplus}_{\mathbb{R}_+} H^2_{t}.$$ 
\begin{remark}
{\rm 
The Toeplitz and Hankel operators $T_{f} = \oplus_{t > 0} T_{f}^{(t)}$ and $H_{f} = \oplus_{t > 0} H_{f}^{(t)} $ are in $\textup{Op}(L)$ for $f \in L^{\infty}$.  The Toeplitz operators are also in $\textup{Op}(H)$.}
\end{remark}
Noting that 
\begin{equation}\label{GL_semi_commutator_TO_section_8}
T_{f}^{(t)} T_{g}^{(t)} - T_{fg}^{(t)} = -\big{(}H_{\bar f}^{(t)}\big{)}^{\ast} H_{g}^{(t)}
\end{equation}
for all $t > 0$, we consider the set
$$
\mathcal{A}:= \Big{\{} f \in L^{\infty}(\mathbb{C}^n)\:  : \: \mbox{$T_{f} T_{g} - T_{fg}$ {\it and}  $T_{g} T_{f} - T_{fg} \in \mathcal{I}$ {\it for all} $g \in L^{\infty}(\mathbb{C}^n)$} \Big{\}}.
$$
\begin{proposition}\label{Proposition_A_closed_subalgebra}
$\mathcal{A}$ is a closed, conjugate-closed subalgebra of $L^{\infty}(\mathbb{C}^n)$ and we have 
\begin{equation}\label{GL_second_representation_of_A}
\mathcal{A}= \Big{\{} f \in L^{\infty}(\mathbb{C}^n) \: : \: H_f, H_{\overline{f}} \in \mathcal{I} \Big{\}}. 
\end{equation}
\end{proposition}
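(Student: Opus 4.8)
The plan is to establish the characterization \eqref{GL_second_representation_of_A} first and then deduce the algebraic properties of $\mathcal{A}$ from it. For the inclusion ``$\supseteq$'', suppose $H_f, H_{\bar f} \in \mathcal{I}$. Given any $g \in L^{\infty}(\C^n)$, the identity \eqref{GL_semi_commutator_TO_section_8} gives $T_f^{(t)}T_g^{(t)} - T_{fg}^{(t)} = -(H_{\bar f}^{(t)})^* H_g^{(t)}$, and since $\|H_g^{(t)}\|_t \leq \|g\|_{\infty}$ is uniformly bounded in $t$ (as $g$ is bounded, $H_g^{(t)} = (I-P^{(t)})M_g$ restricted to $H_t^2$ has norm at most $\|g\|_{\infty}$), we get $\|T_f^{(t)}T_g^{(t)} - T_{fg}^{(t)}\|_t \leq \|H_{\bar f}^{(t)}\|_t \cdot \|g\|_{\infty} \to 0$, so $T_fT_g - T_{fg} \in \mathcal{I}$. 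Applying the same identity with the roles of the Toeplitz factors swapped, $T_g^{(t)}T_f^{(t)} - T_{gf}^{(t)} = -(H_{\bar g}^{(t)})^* H_f^{(t)}$, and now $\|H_f^{(t)}\|_t \to 0$ while $\|H_{\bar g}^{(t)}\|_t \leq \|g\|_{\infty}$, giving $T_gT_f - T_{gf} \in \mathcal{I}$. Hence $f \in \mathcal{A}$.

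For the reverse inclusion ``$\subseteq$'', let $f \in \mathcal{A}$. The natural move is to test against cleverly chosen $g$. Taking $g = \bar f$ (which is bounded) in the definition of $\mathcal{A}$, from \eqref{GL_semi_commutator_TO_section_8} we get $T_f^{(t)}T_{\bar f}^{(t)} - T_{|f|^2}^{(t)} = -(H_{\bar f}^{(t)})^* H_{\bar f}^{(t)}$, whose norm equals $\|H_{\bar f}^{(t)}\|_t^2$; since the left side lies in $\mathcal{I}$, this forces $\|H_{\bar f}^{(t)}\|_t \to 0$, i.e.\ $H_{\bar f} \in \mathcal{I}$. Similarly, using $g = f$ and the swapped identity $T_f^{(t)}T_f^{(t)} - T_{f^2}^{(t)} = -(H_{\bar f}^{(t)})^* H_f^{(t)}$ is not quite a square; instead take $g = \bar f$ in the \emph{other} semi-commutator $T_gT_f - T_{gf}$, giving $T_{\bar f}^{(t)}T_f^{(t)} - T_{|f|^2}^{(t)} = -(H_f^{(t)})^* H_f^{(t)}$, whose norm is $\|H_f^{(t)}\|_t^2$; since this lies in $\mathcal{I}$ we conclude $H_f \in \mathcal{I}$. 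This proves \eqref{GL_second_representation_of_A}.

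With the characterization in hand, the algebraic structure is routine. The set on the right of \eqref{GL_second_representation_of_A} is clearly conjugate-closed by its symmetric form in $f$ and $\bar f$. For the subalgebra property, I would use the Hankel product rule $H_{fg}^{(t)} = T_f^{(t)}H_g^{(t)} + H_f^{(t)}T_g^{(t)}$ (which follows from $H_{fg}^{(t)} = (I-P^{(t)})M_f M_g$ and inserting $I = P^{(t)} + (I-P^{(t)})$), so that $\|H_{fg}^{(t)}\|_t \leq \|f\|_{\infty}\|H_g^{(t)}\|_t + \|H_f^{(t)}\|_t\|g\|_{\infty}$; if $H_f, H_g \in \mathcal{I}$ then $H_{fg} \in \mathcal{I}$, and $H_{\overline{fg}} = H_{\bar f \bar g} \in \mathcal{I}$ by the same estimate, so $fg \in \mathcal{A}$. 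Linearity is immediate from $H_{f+g}^{(t)} = H_f^{(t)} + H_g^{(t)}$. For sup-norm closedness, if $f_k \to f$ in $L^{\infty}$ with $f_k \in \mathcal{A}$, then $\|H_f^{(t)} - H_{f_k}^{(t)}\|_t = \|H_{f-f_k}^{(t)}\|_t \leq \|f - f_k\|_{\infty}$, so $H_f^{(t)}$ is a uniform-in-$t$ limit of operators whose $t\to 0$ norm vanishes; a standard $\epsilon/2$-argument then gives $\limsup_{t\to 0}\|H_f^{(t)}\|_t \leq \|f - f_k\|_{\infty}$ for every $k$, hence $= 0$, and likewise for $H_{\bar f}$, so $f \in \mathcal{A}$.

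I do not anticipate a genuine obstacle here; the only point requiring a little care is the choice of test functions $g$ in proving ``$\mathcal{A} \subseteq \{H_f, H_{\bar f} \in \mathcal{I}\}$'', where one must exploit that $\mathcal{A}$ controls \emph{both} orderings of the semi-commutator in order to extract both $H_f \in \mathcal{I}$ and $H_{\bar f} \in \mathcal{I}$ from the two expressions $\|H_{\bar f}^{(t)}\|_t^2$ and $\|H_f^{(t)}\|_t^2$. Everything else reduces to the product and sum formulas for Hankel operators together with the uniform bound $\|H_g^{(t)}\|_t \leq \|g\|_{\infty}$ for bounded $g$.
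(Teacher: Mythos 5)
Your argument is correct in substance and rests on the same pivot as the paper's, namely the identity \eqref{GL_semi_commutator_TO_section_8}: the characterization \eqref{GL_second_representation_of_A} is obtained exactly as the authors intend (test with $g=\bar f$ in both orderings to read off $\|H_{\bar f}^{(t)}\|_t^2$ and $\|H_f^{(t)}\|_t^2$; conversely use $\|H_g^{(t)}\|_t\le\|g\|_\infty$). Where you diverge is the subalgebra property: the paper stays entirely at the Toeplitz level and telescopes
$T_{fh}T_g-T_{fhg}\in T_fT_hT_g-T_{fhg}+\mathcal I\subset T_fT_{hg}-T_{f(hg)}+\mathcal I\subset\mathcal I$,
using only that $\mathcal I$ is a two-sided ideal and that $\|T_g^{(t)}\|_t\le\|g\|_\infty$ uniformly in $t$; you instead pass through the Hankel characterization and a product rule for Hankel operators. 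Both routes work and are of comparable length; the paper's avoids any Hankel identity, while yours makes the role of \eqref{GL_second_representation_of_A} more transparent. One small correction: the operator identity you quote, $H_{fg}^{(t)}=T_f^{(t)}H_g^{(t)}+H_f^{(t)}T_g^{(t)}$, is not right as stated (and $T_f^{(t)}H_g^{(t)}$ does not even map into $(H_t^2)^\perp$); inserting $I=P^{(t)}+(I-P^{(t)})$ between $M_f$ and $M_g$ gives
\begin{equation*}
H_{fg}^{(t)}=H_f^{(t)}T_g^{(t)}+(I-P^{(t)})M_fH_g^{(t)},
\end{equation*}
but since $\|(I-P^{(t)})M_fH_g^{(t)}\|_t\le\|f\|_\infty\|H_g^{(t)}\|_t$, your norm estimate $\|H_{fg}^{(t)}\|_t\le\|H_f^{(t)}\|_t\|g\|_\infty+\|f\|_\infty\|H_g^{(t)}\|_t$ survives and the rest of the argument (linearity, conjugate-closedness, and the $\epsilon/2$ closedness argument via $\|H_{f-f_k}^{(t)}\|_t\le\|f-f_k\|_\infty$) is fine.
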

\begin{proof}
For $\{f_{n}\} \subset \mathcal{A}$, with $f_{n} \rightarrow f \in L^{\infty}(\mathbb{C}^n)$, it is easy to check that $f \in \mathcal{A}$.  For $f, h \in \mathcal{A}$ and $g \in L^{\infty}(\mathbb{C}^n)$, we have
\begin{equation}
T_{fh}T_{g} - T_{fhg}  \in T_{f}T_{h}T_{g} - T_{fhg} + \mathcal{I} \subset T_{f}T_{hg} - T_{f(hg)} +  \mathcal{I} \subset \mathcal{I}. 
\end{equation}
Hence $\mathcal{A}$ is an algebra, which clearly is invariant under complex conjugation of its elements. The equality of sets in (\ref{GL_second_representation_of_A}) follows from (\ref{GL_semi_commutator_TO_section_8}). 
\end{proof}
Note that the definition of the algebra $\mathcal{A}$ and its characterization in (\ref{GL_second_representation_of_A}) is given in terms of operator conditions. We now give an equivalent description which only involves 
a condition on the level of functions. Consider the set 
\begin{equation*}
\mathcal{B}:= \Big{\{} f \in L^{\infty} (\mathbb{C}^n) \: : \: \lim_{t \rightarrow 0}  \| f\|_{\textup{BMO}_{\ast}^t}=0 \Big{\}}. 
\end{equation*}
\begin{proposition}\label{algebra_A_equals_algebra_B}
$\mathcal{A}= \mathcal{B}$. In particular, $\mathcal{B}$ is a closed, conjugate-closed subalgebra of $L^{\infty}$. 
\end{proposition}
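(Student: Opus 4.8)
The plan is to prove the two inclusions $\mathcal{A} \subseteq \mathcal{B}$ and $\mathcal{B} \subseteq \mathcal{A}$ separately, the second of which is essentially already in hand. For $\mathcal{B} \subseteq \mathcal{A}$: if $f \in \mathcal{B}$, then $\lim_{t \to 0}\|f\|_{\textup{BMO}_{\ast}^t} = 0$ and, since $\textup{BMO}_{\ast}^t$ is conjugation-invariant, also $\lim_{t \to 0}\|\bar f\|_{\textup{BMO}_{\ast}^t} = 0$. Corollary \ref{main_theorem_section_2} then gives $\|H_f^{(t)}\|_t \to 0$ and $\|H_{\bar f}^{(t)}\|_t \to 0$, so $H_f, H_{\bar f} \in \mathcal{I}$, whence $f \in \mathcal{A}$ by the characterization \eqref{GL_second_representation_of_A} in Proposition \ref{Proposition_A_closed_subalgebra}. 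Combined with Proposition \ref{Proposition_A_closed_subalgebra}, this already shows that $\mathcal{B}$ is a closed, conjugate-closed subalgebra once we have the reverse inclusion.

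The real content is $\mathcal{A} \subseteq \mathcal{B}$, i.e. if $H_f$ and $H_{\bar f}$ both lie in $\mathcal{I}$, then $\|f\|_{\textup{BMO}_{\ast}^t} \to 0$. The key is the well-known pointwise identity relating mean oscillation to Hankel norms: for $w \in \mathbb{C}^n$,
\[
\textup{MO}^t(f)(w) = \|H_f^{(t)} k_w^t\|_t^2 + \|H_{\bar f}^{(t)} k_w^t\|_t^2 - |\langle H_{\bar f}^{(t)} k_w^t, H_f^{(t)} k_w^t \rangle_t|^2 \cdot(\text{correction}),
\]
or more cleanly the estimate
\[
\textup{MO}^t(f)(w) \le \|H_f^{(t)} k_w^t\|_t^2 + \|H_{\bar f}^{(t)} k_w^t\|_t^2,
\]
which follows from decomposing $\|(f - \tilde f^{(t)}(w))k_w^t\|_t^2$ and using that $f k_w^t - \tilde f^{(t)}(w) k_w^t$ has both an analytic part (controlled via $H_{\bar f}$ applied suitably) and the Hankel part $H_f^{(t)} k_w^t$. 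Taking the supremum over $w$ gives $\|f\|_{\textup{BMO}_{\ast}^t}^2 \le \|H_f^{(t)}\|_t^2 + \|H_{\bar f}^{(t)}\|_t^2$, and since the right-hand side tends to $0$, so does the left. I would locate or reprove the precise pointwise identity — it is standard in the Fock-space literature (e.g. in the references \cite{B}, \cite{BCI}) — and simply cite it with the $t$-dependence made explicit, exactly as Corollary \ref{main_theorem_section_2} did for the norm estimate in the other direction.

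The main obstacle is getting the constants and the $t$-dependence right in the identity $\|f\|_{\textup{BMO}_{\ast}^t}^2 \le \|H_f^{(t)}\|_t^2 + \|H_{\bar f}^{(t)}\|_t^2$: one must verify that the reproducing-kernel computation that yields $\textup{MO}^t(f)(w) = \langle |f|^2 k_w^t, k_w^t\rangle_t - |\langle f k_w^t, k_w^t\rangle_t|^2$ decomposes cleanly into the two Hankel contributions with constant $1$, independent of $t$. As before, conjugating by the isometries $U_t$ of \eqref{operator_U_t} reduces everything to the fixed weight $t = \tfrac14$, since $\|f(\cdot 2\sqrt t)\|_{\textup{BMO}_{\ast}^{1/4}} = \|f\|_{\textup{BMO}_{\ast}^t}$ by \eqref{identity_BMO_new} and the Hankel operators transform by \eqref{transformation_Hankel_Toeplitz_operator}; so no new $t$-uniformity is needed beyond what Section \ref{Section_Notation} already supplies. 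Once $\mathcal{A} = \mathcal{B}$ is established, the "in particular" clause is immediate from Proposition \ref{Proposition_A_closed_subalgebra}.
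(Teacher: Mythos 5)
Your proposal is correct and follows essentially the same route as the paper: $\mathcal{B}\subseteq\mathcal{A}$ via Corollary \ref{main_theorem_section_2} together with the characterization \eqref{GL_second_representation_of_A}, and $\mathcal{A}\subseteq\mathcal{B}$ via the pointwise bound $\textup{MO}^t(f)(w)\le\|H_f^{(t)}k_w^t\|_t^2+\|H_{\bar f}^{(t)}k_w^t\|_t^2$, which is exactly the paper's Lemma \ref{Lemma_section_8_BMO_versus_Hankel_norm} (the $t$-dependent version of Lemma 2.5 in \cite{B}). The only difference is cosmetic: the paper writes out the proof of that lemma for general $t$, whereas you propose citing the $t=\tfrac14$ case and transporting it with $U_t$ via \eqref{identity_BMO_new} and \eqref{transformation_Hankel_Toeplitz_operator}, which works equally well.
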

\begin{remark}
{\rm One may check directly from the definition of $\mathcal{B}$, that it is a  a closed, conjugate-closed sub-algebra of $L^{\infty}(\mathbb{C}^n)$. }
\end{remark} 
The next lemma serves as a preparation for the proof of Proposition \ref{algebra_A_equals_algebra_B}. It generalizes  (with the same proof) Lemma 2.5 in \cite{B} to the family of weighted spaces 
$L^2_t$ and $H^2_t$, $t>0$. 
\begin{lemma}\label{Lemma_section_8_BMO_versus_Hankel_norm}
Let $t>0$ and $f \in L^{\infty}(\mathbb{C}^n)$. Then the mean oscillation of $f$ can be represented in the form 
\begin{equation}\label{representation_mean_oscillation_chapter_8}
\textup{MO}^{(t)}(f)(w)=\big{\|} (I-P^{(t)})(f \circ \tau_w)\big{\|}_t^2+\big{\|} P^{(t)} [f \circ \tau_w] - \tilde{f}^{(t)}(w) \big{\|}_t^2, 
\end{equation}
where $\tau_w(z):=w-z$ for $z,w \in \mathbb{C}^n$ denotes the translation by $w$. Moreover, 
\begin{equation}\label{Abschaetzung_BMO_by_Hankel_operators}
\|f\|_{\textup{BMO}^t_{\ast}} \leq \sqrt{ \|H_f^{(t)}\|_t^2+ \|H^{(t)}_{\overline{f}}\|_t^2}. 
\end{equation}
\end{lemma}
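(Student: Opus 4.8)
The plan is to mimic the proof of Lemma 2.5 in \cite{B}, keeping all constants and kernels $t$-dependent in the obvious way. First I would fix $w \in \mathbb{C}^n$ and rewrite the defining integral \eqref{Property_and_defn_of_MO} as a norm in $L^2_t$ after a change of variable. Using the translation $\tau_w(z) = w-z$ and the invariance of the Gaussian measure $\mu_t$ under the reflection $z \mapsto -z$, one has
\[
\textup{MO}^t(f)(w) = \int_{\mathbb{C}^n} |f(w-z) - \tilde f^{(t)}(w)|^2 \, d\mu_t(z) = \big\| f \circ \tau_w - \tilde f^{(t)}(w) \big\|_t^2,
\]
where $\tilde f^{(t)}(w)$ is treated as a constant function. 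The key observation is that $\tilde f^{(t)}(w)$ is precisely the value at $0$ of $P^{(t)}(f \circ \tau_w)$ against the kernel, but more usefully: since constants lie in $H^2_t$, the projection $P^{(t)}$ fixes the constant $\tilde f^{(t)}(w)$, and one checks $P^{(t)}(f\circ\tau_w)(w') \to$ evaluated suitably — actually the clean route is to note that $\langle f\circ\tau_w, 1\rangle_t = \tilde f^{(t)}(w)$, so $\tilde f^{(t)}(w)$ is the "zeroth Taylor coefficient" and in particular $P^{(t)}[f\circ\tau_w] - \tilde f^{(t)}(w) \in H^2_t$ is orthogonal (in $H^2_t$, hence in $L^2_t$) to the constant, while $(I-P^{(t)})(f\circ\tau_w) \in (H^2_t)^\perp$.

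Granting these orthogonality relations, I would decompose
\[
f\circ\tau_w - \tilde f^{(t)}(w) = (I-P^{(t)})(f\circ\tau_w) + \big(P^{(t)}[f\circ\tau_w] - \tilde f^{(t)}(w)\big),
\]
observe that the two summands are mutually orthogonal in $L^2_t$ (the first lies in $(H^2_t)^\perp$, the second in $H^2_t$), and apply the Pythagorean theorem to obtain \eqref{representation_mean_oscillation_chapter_8}. For \eqref{Abschaetzung_BMO_by_Hankel_operators}, I would bound each summand separately. The first summand is exactly $H_f^{(t)}(k_w^t)$ up to the normalization of the reproducing kernel — more precisely, writing things in terms of $k_w^t$ one gets $\|(I-P^{(t)})(f\circ\tau_w)\|_t \leq \|H_f^{(t)}\|_t$ after absorbing the unit-norm kernel — so this term is at most $\|H_f^{(t)}\|_t^2$. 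For the second summand, the trick (this is the one genuinely non-obvious step, and where I expect the minor obstacle to lie) is to rewrite $P^{(t)}[f\circ\tau_w] - \tilde f^{(t)}(w)$ in terms of $\bar f$: using the identity $\overline{P^{(t)}[\overline{h}](w)} $-type manipulations, or equivalently observing that testing $P^{(t)}(f\circ\tau_w)$ against reproducing kernels and conjugating converts the analytic projection into a statement about $H_{\bar f}^{(t)}$, one shows $\|P^{(t)}[f\circ\tau_w] - \tilde f^{(t)}(w)\|_t \leq \|H_{\bar f}^{(t)}\|_t$. Adding the two estimates gives \eqref{Abschaetzung_BMO_by_Hankel_operators} after taking the supremum over $w$.

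The main obstacle is thus not the Pythagorean decomposition — which is formal once the orthogonality of the two pieces is recorded — but the identification of the second term $\|P^{(t)}[f\circ\tau_w] - \tilde f^{(t)}(w)\|_t$ with (a bound coming from) the Hankel operator of the conjugate symbol $\bar f$. Everything is uniform in $t$ because the structure is the same for every weighted pair $(L^2_t, H^2_t)$; indeed, one could alternatively push the whole computation through the unitaries $U_t$ of \eqref{operator_U_t} and reduce to the case $t = \tfrac14$, which is literally Lemma 2.5 of \cite{B}, combined with \eqref{identity_BMO_new}. Either way the constant in \eqref{Abschaetzung_BMO_by_Hankel_operators} is the absolute constant $1$ and needs no $t$-dependent adjustment.
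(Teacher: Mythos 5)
Your proposal is correct and follows essentially the same route as the paper: the Pythagorean decomposition of $f\circ\tau_w-\tilde f^{(t)}(w)$ into the piece in $(H^2_t)^\perp$ and the piece in $H^2_t$ gives \eqref{representation_mean_oscillation_chapter_8}, and the second term is then controlled by $H^{(t)}_{\overline{f}}$ exactly as you anticipate. The one step you leave sketched is realized in the paper by the identity $\tilde f^{(t)}(w)=\langle f\circ\tau_w,1\rangle_t=\overline{P^{(t)}(\overline{f}\circ\tau_w)(0)}=P^{(t)}\bigl(\overline{P^{(t)}(\overline{f}\circ\tau_w)}\bigr)$, after which contractivity of $P^{(t)}$ and conjugation-invariance of $\|\cdot\|_t$ yield $\|P^{(t)}[f\circ\tau_w]-\tilde f^{(t)}(w)\|_t\le\|(I-P^{(t)})(\overline{f}\circ\tau_w)\|_t=\|H^{(t)}_{\overline{f}}k_w^t\|_t$.
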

\begin{proof}
Let $w \in \mathbb{C}^n$ be fixed and note that $\langle P^{(t)}(f \circ \tau_w), \tilde{f}^{(t)}(w) \rangle_t= |\tilde{f}^{(t)}(w)|^2$. Therefore 
\begin{equation*}
\big{\|} P^{(t)} [f \circ \tau_w] - \tilde{f}^{(t)}(w) \big{\|}_t^2
= \big{\|} P^{(t)} [f \circ \tau_w] \big{\|}_t^2-|\tilde{f}^{(t)}(w)|^2. 
\end{equation*}
\par 
Using this relation and (\ref{Property_and_defn_of_MO}) we can express the mean oscillation in the form  (\ref{representation_mean_oscillation_chapter_8}):
\begin{align*}
\textup{MO}^{(t)}(f)(w)
&= \|f\circ \tau_w\|_t^2- |\tilde{f}^{(t)}(w)|^2\\
&= \big{\|} (I-P^{(t)})[f \circ \tau_w]\big{\|}_t^2+ \big{\|}P^{(t)} [f \circ \tau_w]\big{\|}_t^2-|\tilde{f}^{(t)}(w)|^2\\
&= \big{\|} (I-P^{(t)})[f \circ \tau_w]\big{\|}_t^2+\big{\|} P^{(t)} [f \circ \tau_w] - \tilde{f}^{(t)}(w) \big{\|}_t^2.
\end{align*}
\par 
Now we can prove the estimate (\ref{Abschaetzung_BMO_by_Hankel_operators}). Note that  (\ref{heat_transform}) implies: 
\begin{align*}
\tilde{f}^{(t)}(w)
= \big{\langle} f \circ \tau_w, 1 \big{\rangle}_t
&= 
\overline{
\big{\langle} \overline{f}\circ \tau_w, K_t(\cdot, 0) \big{\rangle}_t}=\\
&=
 \overline{
 P^{(t)}( \overline{f} \circ \tau_w)(0)}
= P^{(t)} \big{(} \overline{P^{(t)}(\overline{f} \circ \tau_w)} \big{)}. 
\end{align*}
It follows: 
\begin{align*}
\big{\|} P^{(t)} [f \circ \tau_w] - \tilde{f}^{(t)}(w) \big{\|}_t^2
&=\big{\|} P^{(t)} [f \circ \tau_w] - P^{(t)} \big{(} \overline{P^{(t)}(\overline{f} \circ \tau_w)}\big{)} \big{\|}_t^2\\
& \leq \big{\|} f \circ \tau_w - \overline{P^{(t)}(\overline{f} \circ \tau_w)} \big{\|}_t^2=\big{\|} (I-P^{(t)}) ( \overline{f} \circ \tau_w) \big{\|}_t^2. 
\end{align*}
Together with (\ref{representation_mean_oscillation_chapter_8}) one obtains: 
\begin{align*}
\textup{MO}^{(t)}(f)(w)
&\leq \big{\|} (I-P^{(t)})[f \circ \tau_w]\big{\|}_t^2+ \big{\|} (I-P^{(t)}) [\overline{f} \circ \tau_w]\big{\|}_t^2\\
&=\big{\|} H_f^{(t)}k_w^{t}\big{\|}_t^2+  \big{\|} H^{(t)}_{\overline{f}}k_w^t\big{\|}_t^2\leq \| H_f^{(t)}\|^2_t + \|H_{\overline{f}}^{(t)} \|_t^2. 
\end{align*}
Now, (\ref{Abschaetzung_BMO_by_Hankel_operators}) follows from the definition of the $\textup{BMO}_{\ast}^t$-semi-norm. 
\end{proof}
\begin{proof}({\it Proposition \ref{algebra_A_equals_algebra_B}}). 
The inclusion $\mathcal{B} \subset \mathcal{A}$ directly follows from Corollary \ref{main_theorem_section_2} whereas $\mathcal{A} \subset \mathcal{B}$ is 
a consequence of (\ref{GL_second_representation_of_A}) in Proposition \ref{Proposition_A_closed_subalgebra} and (\ref{Abschaetzung_BMO_by_Hankel_operators}) 
in Lemma \ref{Lemma_section_8_BMO_versus_Hankel_norm}. 
\end{proof}
Note that, by Theorem \ref{theorem_VMO_algebra} the space $\textup{VMO}(\mathbb{C}^n)\cap L^{\infty}(\mathbb{C}^n)$ is a closed, conjugate-closed subalgebra of $L^{\infty}(\mathbb{C}^n)$. From Theorem \ref{Theorem_VMO_vanishing_BMO} we know that 
\begin{equation}\label{Inclusion_VMO_bounded_B_A}
\textup{VMO}(\mathbb{C}^n) \cap L^{\infty}(\mathbb{C}^n) \subset \mathcal{B} = \mathcal{A}. 
\end{equation}
\begin{lemma}\label{Lemma_estimate_Var_by_MO}
There is $C_n>0$ only depending on the dimension $n$ such that for all $a \in \mathbb{C}^n$: 
\begin{equation*}
\| f\|_{\textup{BMO}^t_{\ast}}^2\geq 
\textup{MO}^t(f)(a)\geq C_n   \textup{Var}_{B(a,\sqrt{t})} (f). 
\end{equation*}
In particular, the following inclusion holds: $\mathcal{B} \subset \textup{VMO}(\mathbb{C}^n) \cap L^{\infty}(\mathbb{C}^n).$
\end{lemma}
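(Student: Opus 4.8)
The upper bound $\|f\|_{\textup{BMO}^t_\ast}^2 \geq \textup{MO}^t(f)(a)$ is immediate from the definition of the semi-norm as a supremum over $w$. For the lower bound, the plan is to compare the mean oscillation $\textup{MO}^t(f)(a)$ — which is an average of $|f(a-z)-\tilde{f}^{(t)}(a)|^2$ against the Gaussian measure $\mu_t$ — with the flat average $\textup{Var}_{B(a,\sqrt{t})}(f)$ over the ball of radius $\sqrt{t}$. First I would restrict the Gaussian integral to the ball: since $e^{-|a-z|^2/(4t)} \geq e^{-1/4}$ on $B(a,\sqrt{t})$, we get
\begin{equation*}
\textup{MO}^t(f)(a) \geq \frac{e^{-1/4}}{(4\pi t)^n} \int_{B(a,\sqrt t)} |f(z) - \tilde{f}^{(t)}(a)|^2 \, dv(z) = \frac{e^{-1/4}|B(a,\sqrt t)|}{(4\pi t)^n}\cdot \frac{1}{|B(a,\sqrt t)|}\int_{B(a,\sqrt t)} |f(z)-\tilde f^{(t)}(a)|^2\, dv(z).
\end{equation*}
The prefactor $|B(a,\sqrt t)|/(4\pi t)^n$ equals $\omega_{2n} t^n /(4\pi t)^n = \omega_{2n}/(4\pi)^n$ (with $\omega_{2n}$ the volume of the unit ball in $\R^{2n}$), a constant depending only on $n$.

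The remaining point is that replacing the centering constant $\tilde f^{(t)}(a)$ by the ball-average $f_{B(a,\sqrt t)}$ costs only a constant: for any $c$ one has $\frac{1}{|E|}\int_E |f-f_E|^2 \leq \frac{1}{|E|}\int_E |f-c|^2$ (the ball-average minimizes the $L^2$ deviation), so in particular $\textup{Var}_{B(a,\sqrt t)}(f) \leq \frac{1}{|B(a,\sqrt t)|}\int_{B(a,\sqrt t)}|f(z)-\tilde f^{(t)}(a)|^2\, dv(z)$. Combining, $\textup{MO}^t(f)(a) \geq C_n \, \textup{Var}_{B(a,\sqrt t)}(f)$ with $C_n = e^{-1/4}\omega_{2n}/(4\pi)^n$.

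For the concluding inclusion $\mathcal{B} \subset \textup{VMO}(\C^n)\cap L^\infty(\C^n)$: if $f \in \mathcal{B}$ then $\|f\|_{\textup{BMO}^t_\ast} \to 0$, so by the displayed inequality $\sup_{a} \textup{Var}_{B(a,\sqrt t)}(f) \to 0$ as $t \to 0$. Given a ball $B(a,\rho)$ with $\rho$ small, choose $t = \rho^2$; then $\textup{Var}_{B(a,\rho)}(f)$ is uniformly small, and by Lemma \ref{lemma_representation_ Var} together with Remark \ref{remark_cubes_ball} (balls may replace cubes for $\textup{VMO}_2$) this gives $f \in \textup{VMO}_2(\C^n)$, hence $f \in \textup{VMO}(\C^n) \cap L^\infty(\C^n)$ by Theorem \ref{theorem_VMO_algebra}. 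I expect the only mildly delicate step to be bookkeeping the relation between the radius $\rho$ used in the $\textup{VMO}$ definition and the parameter $t$ — one must note that as $t$ ranges over $(0,\infty)$ the radius $\sqrt t$ sweeps all small positive values, so controlling $\textup{Var}_{B(a,\sqrt t)}(f)$ uniformly in $a$ for all small $t$ is exactly the $\textup{VMO}_2$ condition; everything else is the elementary Gaussian-versus-uniform comparison above.
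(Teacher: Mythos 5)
Your proof is correct, and it takes a genuinely different (and slightly more elementary) route than the paper's. The paper's proof invokes the symmetric double-integral representation
\begin{equation*}
\textup{MO}^t(f)(a)= \tfrac{1}{2}\int_{\C^n}\!\int_{\C^n}|f(u)-f(w)|^2\,d\mu_{t,a}(u)\,d\mu_{t,a}(w)
\end{equation*}
(taken from \cite{BBCZ}), restricts both integration variables to $B(a,\sqrt{t})$, uses $e^{-(|a-u|^2+|a-w|^2)/4t}\geq e^{-1/2}$ on that set, and then identifies the remaining double integral with $\textup{Var}_{B(a,\sqrt{t})}(f)$ via Lemma \ref{lemma_representation_ Var}; the resulting constant is $C_n = e^{-1/2}|B(a,\sqrt{t})|^2/(4\pi t)^{2n}$, which is $t$-independent. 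You instead work directly from the single-integral definition \eqref{Property_and_defn_of_MO}, truncate once to the ball $B(a,\sqrt{t})$ to pick up the factor $e^{-1/4}$, and then invoke the standard $L^2$-minimizing property of the flat ball average to replace the Gaussian centering constant $\tilde f^{(t)}(a)$ by $f_{B(a,\sqrt{t})}$, giving $C_n = e^{-1/4}\,\omega_{2n}/(4\pi)^n$. Both constants are $t$-independent because $|B(a,\sqrt{t})|$ scales like $t^n$, and both arguments establish exactly the displayed inequality. Your route avoids the bivariate identity and Lemma \ref{lemma_representation_ Var} entirely (trading the double-integral representation for the Pythagorean minimizing property), which makes it a little more self-contained; the paper's route stays symmetric and reuses its Section~4 machinery. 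Your handling of the concluding inclusion is also correct and you correctly flag the only subtle point: the reindexing $t=\rho^2$ shows that control of all radii $\sqrt{t}$ for $t$ small is precisely the $\textup{VMO}_2$ requirement (with boundedness coming for free from $\mathcal{B}\subset L^\infty$), and one passes to $\textup{VMO}$ via Theorem \ref{theorem_VMO_algebra}.
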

\begin{proof}
It is easy to check (see \cite{BBCZ}) that 
\begin{equation*}
\textup{MO}^t(f)(a)= \frac{1}{2} \int_{\mathbb{C}^n} \int_{\mathbb{C}^n} |f(u)-f(w)|^2 d\mu_{t,a}(u) d\mu_{t,a}(w), 
\end{equation*}
where for $a,u \in \mathbb{C}^n$ we define: 
\begin{equation*}
d\mu_{t,a}(u):= |k_a^t(u)|^2 d\mu_t(u)= \frac{1}{(4\pi t)^n} e^{- \frac{1}{4t} |a-u|^2}dv(u). 
\end{equation*}
Hence, by applying Lemma \ref{lemma_representation_ Var}, we can estimate the mean oscillation from below as follows:
\begin{align*}
\textup{MO}^t(f)(a)
& \geq \frac{1}{2} \frac{1}{(4 \pi t)^{2n}} \int_{B(a, \sqrt{t})} \int_{B(a,\sqrt{t})} |f(u)-f(w)|^2 e^{- \frac{|a-u|^2+|a-w|^2}{4t}} dv(u) dv(w)\\
& \geq  \frac{e^{-\frac{1}{2}}}{(4\pi t)^{2n}}\cdot \frac{1}{2}\int_{B(a,\sqrt{t})} \int_{B(a,\sqrt{t})} |f(u)-f(w)|^2 dv(u) dv(w)\\
&= e^{- \frac{1}{2}} \frac{|B(a,\sqrt{t})|^2}{(4\pi t)^{2n}} \textup{Var}_{B(a,\sqrt{t})} (f). 
\end{align*}
Note that there is a constant $C_n$ only depending on the complex dimension $n$ such that 
\begin{equation*}
e^{- \frac{1}{2}} \frac{|B(a,\sqrt{t})|^2}
{(4\pi t)^{2n}}=C_n.  
\end{equation*}
From this, the statement follows. 
\end{proof}
Finally, (\ref{Inclusion_VMO_bounded_B_A}) together with Lemma \ref{Lemma_estimate_Var_by_MO} shows:
\begin{theorem}\label{Main_theorem_Section_8}
$\mathcal{A}= \mathcal{B}= \textup{VMO}(\mathbb{C}^n) \cap L^{\infty}(\mathbb{C}^n)$. 
\end{theorem}
Theorem \ref{Main_theorem_Section_8} indicates that in case of the ideal $\mathcal{I}$ the algebra $\textup{VMO}(\mathbb{C}^n) \cap L^{\infty}(\mathbb{C}^n)$ plays a similar role as 
$\textup{VO}(\mathbb{C}^n)$ in case of compact operators (cf. Proposition \ref{compact_semi_commutators}). 
\section{On the limit of the norm of Toeplitz operators}
\label{new_Section_6}
For each function $f$ in $\textup{BUC}(\C^n)$, \cite[Theorem 1]{C} shows the following identity
\begin{equation}\label{norm_limit}
\lim\limits_{t \to 0} \|T_{f}^{(t)}\|_t = \|f\|_{\infty}.
\end{equation}

Here, we extend this result to operator symbols $f \in L^{\infty}(\C^n)$ by showing that $\tilde{f}^{(t)}$ converges pointwise almost everywhere to $f$. The result will then be a consequence of Remark \ref{heat_remark}.

Recall that the heat transform $\big{(}\tilde{f}^{(t)}\big{)}_{t > 0}$ has the semi-group property, i.e.~for $s,t>0$ and $f \in L^{\infty}(\mathbb{C}^n)$ it holds
\begin{equation*}
\big{\{} \tilde{f}^{(t)} \widetilde{\big{\}}}^{(s)}= \widetilde{f}^{(s+t)}. 
\end{equation*}
Moreover, the assignment $f \mapsto \tilde{f}^{(t)}$ is a contraction on $L^{\infty}(\C^n)$, i.e.~$\|\tilde{f}^{(t)}\|_{\infty} \leq \|f\|_{\infty}$ for every $t > 0$. Letting $s > t > 0$ and combining these properties shows
\begin{equation*}
\|\tilde{f}^{(s)}\|_{\infty} = \big{\|} \{ \tilde{f}^{(t)} \widetilde{\}}^{(s-t)} \big{\|}_{\infty} \leq \| \tilde{f}^{(t)}\|_{\infty}. 
\end{equation*}
Therefore $t \mapsto \|\tilde{f}^{(t)} \|_{\infty}$ is monotone decreasing and $\lim\limits_{t \rightarrow 0} \| \tilde{f}^{(t)}\|_{\infty}$ exists for all $f \in L^{\infty}(\mathbb{C}^n)$.

As a further preparation, we show that the heat transform is bounded by the Hardy-Littlewood maximal function for any locally integrable $f$. The Hardy-Littlewood maximal function $f^*$ is defined by
\[f^*(w) := \sup\limits_{r > 0} \frac{1}{|B(0,r)|} \int_{B(0,r)} |f(w-z)| \, dv(z),\]
where $|B(0,r)|$ denotes the (Lebesgue) volume of the ball with radius $r$ as usual.

\begin{lemma} \label{Hardy_Littlewood}
For every $t > 0$, $w \in \C^n$ and any locally integrable function $f$ we have $|\tilde{f}^{(t)}(w)| \leq Cf^*(w)$, where $C$ is some constant that only depends on the dimension $n$.
\end{lemma}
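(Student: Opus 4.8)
The plan is to compare the Gaussian heat kernel against a sum of normalized indicator functions of balls, which is the standard trick for dominating a convolution by the Hardy--Littlewood maximal function. First I would write out explicitly
\[
\tilde{f}^{(t)}(w) = \frac{1}{(4\pi t)^n} \int_{\C^n} f(w-z) e^{-\frac{|z|^2}{4t}} \, dv(z),
\]
so that it suffices to bound
\[
\frac{1}{(4\pi t)^n} \int_{\C^n} |f(w-z)| e^{-\frac{|z|^2}{4t}} \, dv(z) \leq C f^*(w).
\]
Since $f^*$ is defined via balls centered at the origin in the $z$-variable, the radial symmetry of the Gaussian is exactly what makes this work.

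The key step is a dyadic decomposition of $\C^n = \R^{2n}$: write $\C^n = B(0,\sqrt{t}) \cup \bigcup_{k \geq 0} \big( B(0, 2^{k+1}\sqrt{t}) \setminus B(0, 2^{k}\sqrt{t}) \big)$ and estimate the Gaussian by its supremum on each annulus. On the inner ball we use $e^{-|z|^2/(4t)} \leq 1$ and $\frac{1}{(4\pi t)^n}|B(0,\sqrt t)| \cdot \frac{1}{|B(0,\sqrt t)|}\int_{B(0,\sqrt t)}|f(w-z)|\,dv(z) \leq C_n f^*(w)$; on the $k$-th annulus we use $e^{-|z|^2/(4t)} \leq e^{-4^{k}/4}$ together with
\[
\int_{B(0,2^{k+1}\sqrt t)\setminus B(0,2^{k}\sqrt t)} |f(w-z)|\,dv(z) \leq |B(0,2^{k+1}\sqrt t)| \cdot f^*(w).
\]
Putting these together, the contribution of the $k$-th annulus is bounded by
\[
\frac{1}{(4\pi t)^n} \, e^{-4^{k}/4} \, \big|B(0,2^{k+1}\sqrt{t})\big| \, f^*(w) = \frac{\omega_{2n}}{(4\pi)^n}\, 2^{2n(k+1)}\, e^{-4^{k}/4}\, f^*(w),
\]
where $\omega_{2n}$ is the volume of the unit ball in $\R^{2n}$; note the $t$-dependence cancels since $|B(0,2^{k+1}\sqrt t)| = 2^{2n(k+1)} t^n \omega_{2n}$ and $(4\pi t)^n$ carries the matching power of $t$. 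Summing over $k \geq 0$ gives a convergent series (super-exponential decay beats the geometric growth $2^{2nk}$), and adding the inner-ball term yields the dimensional constant $C = C_n$.

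There is essentially no obstacle here; the only point requiring a little care is making sure the constant genuinely depends only on $n$ and not on $t$ or $w$ — which is precisely why I track the powers of $t$ through the annular estimate and confirm they cancel. One should also observe that $f^*(w)$ may be infinite, in which case the inequality is trivially true, so no integrability beyond local integrability is needed. This lemma will be applied in the next step (the pointwise a.e.\ convergence $\tilde f^{(t)}\to f$ for $f\in L^\infty$) as the domination hypothesis in a Lebesgue differentiation / dominated convergence argument, after which \eqref{norm_limit} for general $f\in L^\infty(\C^n)$ follows from Remark \ref{heat_remark}.
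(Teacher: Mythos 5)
Your proof is correct and uses essentially the same approach as the paper: decompose $\C^n$ into concentric annuli about the origin, bound the Gaussian kernel by its supremum on each shell, compare each shell integral to a ball average dominated by $f^*(w)$, and sum the resulting geometric-vs-super-exponential series, with the $t$-dependence cancelling exactly as you note. The only (inessential) difference is that the paper uses an arithmetic progression of radii $\sqrt{4kt}$, $k=1,2,\dots$, yielding the constant $\frac{1}{n!}\sum_{k\ge 1}k^n e^{-(k-1)}$, whereas you use dyadic radii $2^k\sqrt t$, yielding $\frac{\omega_{2n}}{(4\pi)^n}\sum_{k\ge 0}2^{2n(k+1)}e^{-4^{k-1}}$ plus the inner-ball term; both are finite and depend only on $n$.
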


\begin{proof}
Recall that the volume of a (real) $2n$-dimensional ball of radius $r$ is given by $\frac{\pi^n}{n!}r^{2n}$. The result now follows by the following computation:
\begin{align*}
|\tilde{f}^{(t)}(w)| &\leq \frac{1}{(4\pi t)^n} \int_{\C^n} |f(w-z)| e^{-\frac{|z|^2}{4t}} \, dv(z)\\
&\leq \frac{1}{(4\pi t)^n} \int_{B(0,\sqrt{4t})} |f(w-z)| \, dv(z) + \frac{1}{(4\pi t)^n} \int_{B(0,\sqrt{8t}) \setminus B(0,\sqrt{4t})} |f(w-z)|e^{-1} \, dv(z)\\
&\quad + \ldots\\
&\leq \sum\limits_{k = 1}^{\infty} \frac{k^n}{n!} \frac{1}{|B(0,\sqrt{4kt})|} \int_{B(0,\sqrt{4kt})} |f(w-z)| e^{-(k-1)} \, dv(z)\\
&\leq \frac{1}{n!}\sum\limits_{k = 1}^{\infty} k^ne^{-(k-1)} f^*(w)\\
&=: Cf^*(w).\qedhere
\end{align*}
\end{proof}

Now we are ready to prove the theorem announced at the beginning of this section.

\begin{theorem} \label{heat_convergence_L_infinity}
Let $f \in L^{\infty}(\C^n)$. Then $\lim\limits_{t \to 0} \tilde{f}^{(t)}(w) = f(w)$ for almost every $w \in \C^n$. In particular,
\[\lim\limits_{t \to 0} \|\tilde{f}^{(t)}\|_{\infty} = \lim\limits_{t \to 0} \|T_f^{(t)}\|_t = \|f\|_{\infty}.\]
\end{theorem}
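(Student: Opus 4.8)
The plan is to establish the almost-everywhere convergence $\tilde{f}^{(t)}(w) \to f(w)$ via the standard Lebesgue-differentiation / approximate-identity argument, and then deduce the norm statement from the monotonicity established above together with Remark~\ref{heat_remark}. First I would reduce the pointwise convergence to the case where $f$ is additionally continuous with compact support: given $\varepsilon > 0$, pick $g \in C_c(\C^n)$ with $\|f - g\|_{L^1(K)}$ small on a large ball $K$, note that $\widetilde{g}^{(t)} \to g$ pointwise (indeed uniformly) by a direct estimate since the Gaussians $(4\pi t)^{-n}e^{-|z|^2/4t}\,dv(z)$ form an approximate identity, and control the error term $\widetilde{(f-g)}^{(t)}$ using Lemma~\ref{Hardy_Littlewood}: $|\widetilde{(f-g)}^{(t)}(w)| \leq C(f-g)^*(w)$. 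The weak $(1,1)$ maximal inequality then bounds, uniformly in $t$, the measure of the set where $(f-g)^*$ is large, and a now-classical argument (exactly as in the proof of the Lebesgue differentiation theorem) shows that $\limsup_{t\to 0}|\tilde{f}^{(t)}(w) - f(w)| = 0$ for a.e.~$w$.

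Once a.e.~convergence is in hand, since $\|\tilde{f}^{(t)}\|_\infty \leq \|f\|_\infty$ for all $t$, and since at a.e.~point $w$ we have $|f(w)| = \lim_{t\to 0}|\tilde{f}^{(t)}(w)| \leq \liminf_{t\to 0}\|\tilde{f}^{(t)}\|_\infty$, taking the essential supremum over $w$ gives $\|f\|_\infty \leq \liminf_{t\to 0}\|\tilde{f}^{(t)}\|_\infty$. Combined with $\limsup_{t\to 0}\|\tilde{f}^{(t)}\|_\infty \leq \|f\|_\infty$, this yields $\lim_{t\to 0}\|\tilde{f}^{(t)}\|_\infty = \|f\|_\infty$ (the limit exists anyway by the monotonicity already proved). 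Finally, Remark~\ref{heat_remark} gives $\|\tilde{f}^{(t)}\|_\infty \leq \|T_f^{(t)}\|_t$, while the trivial bound $\|T_f^{(t)}\|_t \leq \|f\|_\infty$ (the projection $P^{(t)}$ and multiplication by $f$ are both contractive on the relevant spaces) squeezes $\lim_{t\to 0}\|T_f^{(t)}\|_t = \|f\|_\infty$.

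The main obstacle is purely the a.e.~convergence of the heat transform: one must be a little careful that $t\to 0$ is a genuine limit over a continuum of parameters rather than a sequence, but this is handled automatically by the maximal-function machinery since the bound $|\tilde{f}^{(t)}(w)| \leq Cf^*(w)$ of Lemma~\ref{Hardy_Littlewood} is uniform in $t$, so the standard proof goes through verbatim with $\sup_t$ in place of $\sup_n$. Everything else is routine: the reduction to $C_c$, the elementary estimate for continuous compactly supported $g$, and the final squeeze are all standard.
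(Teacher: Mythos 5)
Your overall strategy is the same as the paper's (approximate by continuous compactly supported functions, control the error through the Hardy--Littlewood maximal function of Lemma~\ref{Hardy_Littlewood} and the weak $(1,1)$ inequality, then squeeze the norms using Remark~\ref{heat_remark}), and your deduction of the norm identity from the a.e.\ convergence is correct --- in fact slightly cleaner than the paper's, which invokes Egorov's theorem where your pointwise $\liminf$ argument suffices. However, there is a genuine gap in the reduction step. You propose to pick $g \in C_c(\C^n)$ with $\|f-g\|_{L^1(K)}$ small and then to bound $\widetilde{(f-g)}^{(t)}$ by $C(f-g)^*$ and apply the weak $(1,1)$ inequality to $(f-g)^*$. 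But $f-g$ is only in $L^\infty(\C^n)$, not in $L^1(\C^n)$, so $\|f-g\|_{L^1(\C^n)} = \infty$ in general and the weak-type estimate gives nothing; smallness of $\|f-g\|_{L^1(K)}$ on a ball does not control $(f-g)^*$ on $K$, since the maximal function averages over balls of \emph{all} radii and therefore sees the far-away part of $f-g$. In the classical Lebesgue differentiation theorem this is harmless because the averages over small balls centered in $K$ only involve $f\chi_{K'}$ for a slightly larger compact $K'$; here the heat kernel is globally supported, so that localization fails and "the standard proof goes through verbatim" is precisely the point that needs an extra idea.

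The missing ingredient is an intermediate truncation: set $g_0 := f\cdot\chi_{B(0,r)} \in L^1(\C^n)$ with $r$ chosen so large that
\begin{equation*}
\int_{\C^n \setminus B(w,r)} d\mu_t(z) < \frac{\varepsilon}{\|f\|_\infty}
\quad\text{for all } w \in K \text{ and all } t \in (0,1),
\end{equation*}
which is possible because for $|z|$ large the Gaussian weight $(4\pi t)^{-n}e^{-|z|^2/4t}$ decreases as $t \to 0$. This gives $|\tilde f^{(t)}(w) - \tilde g_0^{(t)}(w)| < \varepsilon$ uniformly on $K$ for small $t$, and only \emph{then} does one approximate $g_0$ in $L^1(\C^n)$ by a continuous compactly supported $g_1$ and run the maximal-function argument on $g_0 - g_1 \in L^1(\C^n)$. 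With this additional step (which is exactly how the paper proceeds), the rest of your outline is correct.
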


\begin{proof}
Let $f \in L^{\infty}(\mathbb{C}^n)$, $K \subset \C^n$ compact and $\delta, \epsilon > 0$. Choose $r$ sufficiently large such that $K \subset B(0,r)$ and
\begin{equation} \label{gaussian_balls_size}
\int_{\C^n \setminus B(w,r)} \, d\mu_t(z) < \frac{\epsilon}{\|f\|_{\infty}}
\end{equation}
for all $t \in (0,1)$ and $w \in K$. To see that this is possible observe that for sufficiently large $|z|$ the weight $\frac{1}{(4\pi t)^n} e^{-\frac{|z|^2}{4t}}$ is decreasing as $t \to 0$. We may thus choose a radius $r_0$ such that \eqref{gaussian_balls_size} holds for some $w_0 \in K$. Adding the diameter of $K$ to $r_0$ yields a sufficiently large radius $r$.

Cutting $f$ at that radius, i.e.~setting $g_0 := f \cdot \chi_{B(0,r)} \in L^1(\C^n,dv)$, we get
\[|\tilde{f}^{(t)}(w) - \tilde{g_0}^{(t)}(w)| = \int_{\C^n \setminus B(w,r)} |f(w-z)| \, d\mu_t(z) < \epsilon\]
for all $w \in K$ and $t \in (0,1)$. Now choose a continuous function $g_1$ of compact support such that $\|g_0-g_1\|_{L^1} < \delta$. Then, by the inequality above and Proposition \ref{Lemma_convergence_heat_transform_UC_functions},
\begin{align*}
|\tilde{f}^{(t)}(w) - f(w)| &\leq |\tilde{f}^{(t)}(w) - \tilde{g_0}^{(t)}(w)| + |\tilde{g_0}^{(t)}(w) - \tilde{g_1}^{(t)}(w)| + |\tilde{g_1}^{(t)}(w) - g_1(w)|\\
&\quad + |g_1(w) - g_0(w)| + |g_0(w) - f(w)|\\
&< 2\epsilon + |\tilde{g_0}^{(t)}(w) - \tilde{g_1}^{(t)}(w)| + |g_1(w) - g_0(w)|
\end{align*}
for all $w \in K$ and sufficiently small $t$. To obtain the assertion, we need to show that
\[\{w \in \C^n : \limsup_{t \to 0} |\tilde{f}^{(t)}(w) - f(w)| > 4\epsilon\}\]
is a null set for all $\epsilon > 0$.

By Lemma \ref{Hardy_Littlewood}, we have
\[|\tilde{g_0}^{(t)}(w) - \tilde{g_1}^{(t)}(w)| \leq C(g_0-g_1)^*(w)\]
and as is well-known, the Hardy-Littlewood maximal function satisfies the weak (1,1)-inequality, i.e.~there exists a constant $C_1$ depending only on the dimension $n$ such that
\[|\{w \in \C^n : g^*(w) > \epsilon\}| \leq \frac{C_1}{\epsilon}\|g\|_{L^1}\]
for all $g \in L^1(\C^n,dv)$ (see \cite[Theorem 1]{SS}). Applying this to $g_0-g_1$, we obtain
\begin{align*}
|\{w \in \C^n : |\tilde{g_0}^{(t)}(w) - \tilde{g_1}^{(t)}(w)| > \epsilon\}| &\leq |\{w \in \C^n : (g_0-g_1)^*(w) > \frac{\epsilon}{C}\}| \leq \frac{CC_1}{\epsilon}\|g_0-g_1\|_{L^1}\\
&< \frac{CC_1}{\epsilon}\delta.
\end{align*}
Moreover,
\[|\{w \in K : |g_1(w) - g_0(w)| > \epsilon\}| \leq \frac{1}{\epsilon}\|g_0-g_1\|_{L^1} < \frac{\delta}{\epsilon}\]
by Markov's inequality. Thus
\[|\{w \in K : \limsup_{t \to 0} |\tilde{f}^{(t)}(w) - f(w)| > 4\epsilon\}| < \frac{CC_1+1}{\epsilon}\delta.\]
Since $\delta$ was arbitary, we get
\[|\{w \in K : \limsup_{t \to 0} |\tilde{f}^{(t)}(w) - f(w)| > 4\epsilon\}| = 0\]
for all $\epsilon > 0$ and all compact sets $K \subset \C^n$. Clearly, by taking a countable covering, this remains true if we replace $K$ by $\C^n$ in the above formula. Therefore $\tilde{f}^{(t)}(w)$ 
converges to $f(w)$ for almost every $w \in \C^n$.

To prove the second assertion choose for every $\epsilon > 0$ a bounded set $A_{\epsilon} \subset \C^n$ with $|A_{\epsilon}| > 0$ such that $|f(w)| \geq \|f\|_{\infty} - \epsilon$ for all $w \in A_{\epsilon}$. 
By Egorov's theorem, we can additionally assume $\tilde{f}^{(t)}(w) \to f(w)$ uniformly for all $w \in A_{\epsilon}$. It follows
\[\|f\|_{\infty} \geq \lim\limits_{t \to 0} \|\tilde{f}^{(t)}\|_{\infty} \geq \lim\limits_{t \to 0} \|\tilde{f}^{(t)}|_{A_{\epsilon}}\|_{\infty} \geq \|f\|_{\infty} - \epsilon\]
for all $\epsilon > 0$. Using Remark \ref{heat_remark} and the obvious inequality $\|T_f^{(t)}\|_t \leq \|f\|_{\infty}$, we conclude
\[\lim\limits_{t \to 0} \|\tilde{f}^{(t)}\|_{\infty} = \lim\limits_{t \to 0} \|T_f^{(t)}\|_t = \|f\|_{\infty}.\qedhere\]
\end{proof}

\section{Examples}
\label{examples}

In this section we provide three explicit examples. The first two are counterexamples to the statements of Theorem \ref{Theorem_behaviour_semi_commutator_uniformly_bounded_function} and Theorem \ref{Theorem_Quantization_VMO_Fock}. Clearly, these functions cannot be uniformly continuous or have vanishing mean oscillation. The third example shows that the semi-commutator of two unbounded Toeplitz operators can be bounded and that its norm can still tend to $0$ as $t \to 0$.

\noindent
{\bf (A):} Direct calculation shows that a natural orthonormal basis for $H^{2}(\mathbb{C}, d\mu_{t})$ consists of the functions
\begin{equation*}
e^{(t)}_{k} = \frac{z^k}{\sqrt{(4t)^{k} k!}}, \hspace{3ex}  \mbox{\it where} \hspace{3ex}  k \in \mathbb{Z}_+ = \{ 0, 1, 2, 3, \ldots\}. 
\end{equation*}
Moreover, for $f(z) = \exp(i |z|^{2})$ and $g(z) = \exp(-i |z|^{2})$, we see that $T^{(t)}_{f} , T^{(t)}_{g}$ are diagonal in the orthonormal basis $\{e^{(t)}_{k}: k \in \mathbb{Z}_+\}$ with eigenvalues
\begin{equation*}
s_{k}\big{(} \exp(i|z|^{2})\big{)} = (1- 4ti )^{-(k + 1)} \hspace{2ex}\mbox{\it and} \hspace{2ex}  s_{k}\big{(}\exp(-i|z|^{2})\big{)} = (1 + 4ti )^{-(k + 1)}.
\end{equation*}
Since $f(z)g(z)\equiv 1$,
\begin{equation*}
\|T^{(t)}_{f} T^{(t)}_{g} - T^{(t)}_{fg}\|_{t}= \sup \big{\{}1 - (1 + 16t^{2})^{-(k + 1)} : k  \in \mathbb{Z}_+\big{\}}= 1
\end{equation*}
for all $t > 0$. We also observe that $[T^{(t)}_{f}, T^{(t)}_{g}] = 0 = \{f, g\} $.
\vspace{1ex}\\ 
{\bf (B)}: Here is another counterexample to the quantization result in Theorem \ref{Theorem_behaviour_semi_commutator_uniformly_bounded_function} and Theorem \ref{Theorem_Quantization_VMO_Fock}. Different from the example in (A) we choose a symbol which has high oscillation inside the domain (in a zero-neighbourhood). Such an effect was already observed in \cite{BC00}, however, 
the present example is even simpler. Let $n=1$ and consider the following symbol: 
\begin{equation*}
f(z):=
\begin{cases}
0, & \mbox{\it if $z=0$} \\
1, & \mbox{\it if $2^j\leq |z| <2^{j+1}$ and $j \in \mathbb{Z}$ is even}\\  
-1, & \mbox{\it if $2^j \leq |z| < 2^{j+1}$ and $j \in \mathbb{Z}$ is odd}. 
\end{cases}
\end{equation*}
We have $-f(z)= f(\frac{z}{2})$ for all $z \in \mathbb{C}$. Let $U_t: H_t^2 \rightarrow H^2_{\frac{1}{4}}$ be the family of isometries defined in 
(\ref{operator_U_t}).  As was observed in (\ref{Transformation_Toeplitz_operator}) the Toeplitz operator $T_{f(\cdot 2 \sqrt{t})}^{(\frac{1}{4})}$ transforms under conjugation by $U_t$ as follows: 
\begin{equation*}
T_f^{(t)}= U_t^*T_{f(\cdot 2 \sqrt{t})}^{(\frac{1}{4})} U_t. 
\end{equation*}
Choose a sequence $t_{\ell}:= 4^{-{\ell}-1}$ where $\ell \in \mathbb{N}$. Then we have 
\begin{equation*}
T_f^{(t_{\ell})} T_f^{(t_{\ell})}-T_{f^2}^{(t_{\ell})} 
= U_{t_{\ell}}^* \Big{[} T_{f(\cdot 2^{-\ell})}^{(\frac{1}{4})} T_{f(\cdot 2^{-\ell})}^{(\frac{1}{4})} -I \Big{]} U_{t_{\ell}}. 
\end{equation*}
Because of $f(\cdot 2^{-{\ell}})=(-1)^{\ell} f$ we obtain: 
\begin{equation*}
\big{\|} T_f^{(t_{\ell})} T_f^{(t_{\ell})}-T_{f^2}^{(t_{\ell})} \big{\|}_{t_{\ell}}=\big{\|} T_f^{(\frac{1}{4})} T_f^{(\frac{1}{4})} -T_{f^2}^{(\frac{1}{4})} \big{\|}_{\frac{1}{4}}
=\big{\|} T_f^{(\frac{1}{4})} T_f^{(\frac{1}{4})} -I \big{\|}_{\frac{1}{4}}=(*). 
\end{equation*}
One can check (e.g. using the fact that $T_f^{(\frac{1}{4})}$ is a diagonal operator) that $(*)$ is non-zero. Since ($*$) also does not depend on $\ell$ we cannot have $\lim_{t\rightarrow 0} \|T_f^{(t)} T_f^{(t)} -T_{f^2}^{(t)}\|_t=0$. 
\vspace{1ex}\\
{\bf (C):} We consider the annihilation and creation operators $T_{\bar z}^{(t)}, T_{z}^{(t)} $ on the one-particle bosonic Fock space $H^{2}(\mathbb{C}, d\mu_{t})$ for all $t > 0$. 
Standard calculation shows for $k  \in \mathbb{Z}_+$: 
\begin{align*}
T_{z}^{(t)} e_{k}^{(t)} 
&= \{(4t)(k + 1)\}^{1/2} e_{k + 1}^{(t)}, \\
T_{\bar z}^{(t)} e_{k + 1}^{(t)} 
&= \{(4t)(k + 1)\}^{1/2} e_{k}^{(t)} \hspace{3ex} \mbox{\it and} \hspace{3ex} T_{\bar z}^{(t)} e_{0}^{(t)} = 0.
\end{align*}
It follows for $k \in \mathbb{Z}_+$ that 
\begin{equation*}
[T_{\bar z}^{(t)}, T_{z}^{(t)}] e_{k}^{(t)} = 4t e_{k}^{(t)}.
\end{equation*}
Noting that $T_{\bar z}^{(t)}T_{z}^{(t)} = T_{{\bar z} z}^{(t)}$ yields
\begin{equation*}
\|T_{ z}^{(t)}T_{\bar z}^{(t)} - T_{z{\bar z}}^{(t)}\|_{t} = 4t \rightarrow 0 \hspace{3ex} \mbox{\it as} \hspace{3ex} t \rightarrow 0. 
\end{equation*}
{\bf Remark.}  
Direct computational checks of Theorem \ref{Theorem_behaviour_semi_commutator_uniformly_bounded_function} in the diagonal case bring us to (or even over) the edge of what is 
possible using Stirling's approximation. For an example, consider the estimation of
\begin{equation*}
\| T_{|z|}^{(t)} T_{|z|}^{(t)} - T_{|z|^{2}}^{(t)} \|_{t}.
\end{equation*}
\noindent
{\bf Acknowledgement:} We thank Jingbo Xia for his useful conversation and comments. In particular, he pointed out the simple proof of Corollary  \ref{main_theorem_section_2} which 
replaced our previous (and slightly weaker) estimate on the norm of the Hankel operator $H_f^{(t)}$. 

\end{document}